\theoremstyle{plain}
\newtheorem{theorem}{Theorem}[section]
\newtheorem*{theorem*}{Theorem}
\newtheorem{lemma}[theorem]{Lemma}
\newtheorem{corollary}[theorem]{Corollary}
\theoremstyle{definition}
\newtheorem{definition}[theorem]{Definition}
\theoremstyle{remark}
\newtheorem{remark}[theorem]{Remark}
\newtheorem*{acknowledgements}{Acknowledgements}
\newtheorem*{Data}{Data Availability}
\newcommand{\del}{\partial}
\newcommand{\R}{\mathbb{R}}
\renewcommand{\H}{\mathbb{H}}
\renewcommand{\S}{\mathbb{S}}
\newcommand{\E}{\mathbb{E}}
\newcommand{\diam}{\mbox{diam}}
\begin{document}

\title{An upper bound on Pachner moves relating geometric triangulations}

\author[Kalelkar]{Tejas Kalelkar}
\address{Mathematics Department, Indian Institute of Science Education and Research, Pune 411008, India}
\email{tejas@iiserpune.ac.in}

\author[Phanse]{Advait Phanse}
\address{Mathematics Department, Indian Institute of Science Education and Research, Pune 411008, India}
\email{advait.phanse@students.iiserpune.ac.in}

\date{\today}

\keywords{Hauptvermutung, geometric triangulation, Pachner moves, combinatorial topology}

\subjclass[2010]{Primary 57Q25, 57R05}

\begin{abstract}
We show that any two geometric triangulations of a closed hyperbolic, spherical or Euclidean manifold are related by a sequence of Pachner moves and barycentric subdivisions of bounded length. This  bound is in terms of the dimension of the manifold, the number of top dimensional simplexes and bound on the lengths of edges of the triangulation. This leads to an algorithm to check from the combinatorics of the triangulation and bounds on lengths of edges, if two geometrically triangulated closed hyperbolic or low dimensional spherical manifolds are isometric or not. 
\end{abstract}

\maketitle

\section{Introduction}\label{intro}
The problem of determining if two given manifolds are homeomorphic has been extensively studied. Using ideas from Perelman's proof of the geometrization of closed irreducible 3 dimensional manifolds, Scott and Short\cite{ScoSho} have built on work by Manning, Jaco, Oertel and others to give an algorithm for the homeomorphism problem of such manifolds. More recently, Kuperberg\cite{Kup} has given a self-contained proof using only the statement of geometrization to show that the homeomorphism problem for 3-manifolds has computational complexity that is bounded by a bounded tower of exponentials in the number of tetrahedra.

Pachner\cite{Pac} has shown that any two simplicial triangulations of a manifold which have a common subdivision are related by a finite sequence of local combinatorial transformations called bistellar or Pachner moves. A bound on the number of such moves required to go from one triangulation of an $n$-manifold to another gives an algorithm to solve the homeomorphism problem for PL $n$-manifolds. Mijatovic in a series of papers gives such a bound for a large class of 3-manifolds \cite{Mij1} \cite{Mij2} \cite{Mij3} \cite{Mij4}. The bounds he obtains are also in terms of bounded towers of exponentials on the number of tetrahedra. In 1958, Markov\cite{Mar} had shown that the homeomorphism problem is unsolvable for manifolds of dimension greater than 3. This curtailed the search for a general algorithm applicable to manifolds of all dimension. For closed hyperbolic manifolds, the fundamental group is a complete invariant but it is not easy to algorithmically check if two Kleinian groups are isomorphic.

In this paper, we give an algorithmic solution for the homeomorphism problem on the restricted class of geometrically triangulated constant curvature manifolds, by obtaining a bound on the number of barycentric subdivisions and Pachner moves needed to relate them.

The \emph{geometric triangulation} of a Riemannian manifold is a finite simplicial triangulation where the interior of every simplex is a totally geodesic disk. Every Euclidean, spherical or hyperbolic manifold has a geometric triangulation. To see this, proceed as in Theorem 7.3 of \cite{HodRubSegTil}: Take the Dirichlet domain $\mathcal{D}$ of $M$ in its universal cover $\widetilde{M}$. As $\widetilde{M}$ is $\E^n$, $\H^n$ or $S^n$ so $\mathcal{D}$ is a convex geometric polyhedron. The boundary of $\mathcal{D}$ projects to the cut locus of a point in $M$, which gives a spine for $M$. We can now subdivide this cell decomposition into a geometric triangulation. 

In our proof we need the links of positive dimensional simplexes to be shellable spheres. We show that after sufficiently many barycentric subdivisions the links of simplexes do become shellable. As triangulated spheres of dimension at most 2 are always shellable, so for manifolds of dimension $n\leq4$ the links of positive dimensional simplexes are shellable and we do not need to take the initial barycentric subdivisions. The main result we prove in this paper is the following:

\begin{theorem}\label{mainthm}
Let $M$ be closed spherical, Euclidean or hyperbolic $n$-manifold with geometric triangulations $K_1$ and $K_2$. Let $K_1$ and $K_2$ have $p$ and $q$ many $n$-simplexes respectively with lengths of edges bounded above by $\Lambda$.When $M$ is spherical, we require $\Lambda \leq \pi/2$.  Let $inj(M)$ denote the injectivity radius of $M$. 

When $n\leq 4$, then $K_1$ and $K_2$ are related by $f(n, p, q, \Lambda,inj(M))$ many Pachner moves which do not remove common vertices. In general, their $2^{n+1}$-th barycentric subdivisions, $\beta^{2^{n+1}} K_1$ and $\beta^{2^{n+1}} K_2$ are related by $f(n, p, q, \Lambda, inj(M))$ many Pachner moves which do not remove common vertices. The bounding function $f$ is the following: 
$$f(n, p, q, \Lambda, inj(M))= 2^{n+2} (n+1)!^{4+3m}pq(p + q)$$
where $m$ is an integer greater than $\mu \ln(\Lambda /inj(M))$ and when $n>4$ we also require $m\geq 2^{n+1}$. The constant $\mu$ is as follows:
\begin{enumerate}
\item{When $M$ is Euclidean, $\mu = n+1$}
\item{When $M$ is Spherical, $\mu=2n+1$}
\item{When $M$ is Hyperbolic, $\mu = n\cosh^{n-1}(\Lambda)+1$}
\end{enumerate}
\end{theorem}

From Lemma \ref{inj} and Theorem \ref{HK} it follows that $inj(M) > \pi vol(M)/\delta vol(\S^n)$ which gives the following corollary in terms of the volume and diameter of the manifold.

\begin{corollary}\label{maincor}
With notations as in Theorem \ref{mainthm}, we can take $m$ to be an integer greater than $\mu \ln(\Lambda \delta vol(\S^n)/(\pi vol(M)))$ and when $n>4$ we also require $m \geq 2^{n+1}$. The constant $\delta$ is as follows:
\begin{enumerate}
\item{When $M$ is Euclidean, $\delta = diam(M)$}
\item{When $M$ is Spherical, $\delta=\sin^{n-1}(diam(M))$}
\item{When $M$ is Hyperbolic, $\delta = \sinh^{n-1}(diam(M))$}
\end{enumerate}
\end{corollary}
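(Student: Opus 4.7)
The proof is essentially a substitution. The statement of Theorem \ref{mainthm} allows any integer $m > \mu \ln(\Lambda/\mathrm{inj}(M))$, and the paragraph preceding the corollary already records the geometric inequality
\[ \mathrm{inj}(M) > \frac{\pi\,\mathrm{vol}(M)}{\delta\,\mathrm{vol}(\S^n)}, \]
which is asserted to follow from Lemma \ref{inj} together with Theorem \ref{HK}. So my plan is simply to bootstrap the corollary from this inequality without reopening the geometric estimate.

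First I would rewrite the bound as
\[ \frac{1}{\mathrm{inj}(M)} < \frac{\delta\,\mathrm{vol}(\S^n)}{\pi\,\mathrm{vol}(M)}, \]
multiply both sides by the positive quantity $\Lambda$, and then apply the monotonicity of $\ln$ (noting that $\Lambda/\mathrm{inj}(M)$ is positive, so the logarithm is well-defined even when it happens to be negative) to conclude
\[ \mu\ln\!\bigl(\Lambda/\mathrm{inj}(M)\bigr) < \mu\ln\!\Bigl(\Lambda\,\delta\,\mathrm{vol}(\S^n)\big/(\pi\,\mathrm{vol}(M))\Bigr). \]
Consequently, any integer $m$ exceeding the right-hand side automatically satisfies the hypothesis of Theorem \ref{mainthm}, so the Pachner bound stated in that theorem holds with this new, purely diameter/volume expression for $m$. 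The three cases for $\delta$ are inherited verbatim from the three cases of the bound on $\mathrm{inj}(M)$.

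The only step that requires any care is ensuring the inequality in the preamble to the corollary is applied in the correct direction of the logarithm and that the expression inside the log is positive, which it clearly is since $\Lambda$, $\delta$, $\mathrm{vol}(\S^n)$, and $\mathrm{vol}(M)$ are all positive. Beyond that there is no new geometry to do; the corollary is a clean logical consequence of Theorem \ref{mainthm} combined with the cited lemma and theorem, and the main obstacle (establishing the lower bound on injectivity radius from volume and diameter data) has already been handled outside this excerpt in Lemma \ref{inj} and Theorem \ref{HK}.
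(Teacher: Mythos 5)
Your proposal is correct and matches the paper's own (implicit) argument: the paper proves the corollary exactly by combining the bound $inj(M) > \pi\, vol(M)/(\delta\, vol(\S^n))$ from Lemma \ref{inj} and Theorem \ref{HK} with the monotonicity of $\ln$, so that any integer exceeding $\mu \ln(\Lambda\, \delta\, vol(\S^n)/(\pi\, vol(M)))$ also exceeds $\mu \ln(\Lambda/inj(M))$ and hence satisfies the hypothesis of Theorem \ref{mainthm}. Nothing further is needed.
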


To express $m$ entirely in terms of the triangulation, we therefore need an upper diameter bound and a lower volume bound as a function of $n$, $p$, $q$ and bounds on lengths of edges. We therefore replace $diam(M)$ by $p\Lambda$, because we can always choose a piecewise geodesic path between two points of $M$ that intersects each simplex at most once and by Lemma \ref{diam}, the diameter of a simplex is bounded by the maximum length of its edges. And we can replace $vol(M)$ by $p$ times the volume of a regular $n$-simplex of length $\lambda$, where $\lambda$ is a lower bound on the length of edges of the triangulation. This gives the following corollary with a bound entirely in terms of the combinatorics of the triangulations and bounds on lengths of edges:

\begin{corollary}\label{maincor2}
Let $M$ be closed spherical, Euclidean or hyperbolic $n$-manifold with geometric triangulations $K_1$ and $K_2$. Let $K_1$ and $K_2$ have $p$ and $q$ many $n$-simplexes respectively with lengths of edges bounded above by $\Lambda$ and below by $\lambda$. When $M$ is spherical, we require $\Lambda \leq \pi/2$.  Let $\Delta^n_\lambda$ denote the regular $n$-simplex with edges of length $\lambda$.

When $n\leq 4$, then $K_1$ and $K_2$ are related by $f(n, p, q, \Lambda, \lambda)$ many Pachner moves which do not remove common vertices. In general, their $2^{n+1}$-th barycentric subdivisions, $\beta^{2^{n+1}} K_1$ and $\beta^{2^{n+1}} K_2$ are related by $f(n, p, q, \Lambda, \lambda)$ many Pachner moves which do not remove common vertices. The bounding function $f$ is the following: 
$$f(n, p, q, \Lambda, \lambda)= 2^{n+2} (n+1)!^{4+3m}pq(p + q)$$
where $m$ is an integer greater than $\mu \ln(\Lambda \delta vol(\S^n)/(\pi p\, vol(\Delta^n_\lambda))$ and when $n>4$ we also require $m\geq 2^{n+1}$. The constants $\mu$ and $\delta$ are as follows:
\begin{enumerate}
\item{When $M$ is Euclidean, $\mu = n+1$, $\delta = p\Lambda$}
\item{When $M$ is Spherical, $\mu=2n+1$, $\delta=\sin^{n-1}(p\Lambda)$}
\item{When $M$ is Hyperbolic, $\mu=n\cosh^{n-1}(\Lambda)+1$, $\delta = \sinh^{n-1}(p\Lambda)$}
\end{enumerate}
\end{corollary}

\begin{remark}\label{volhyp}
Hyperbolic manifolds have lower volume bounds that depend only on the dimension of the manifold. This allows us to obtain a lower bound for $m$ independent of $\lambda$. For $M$ a closed orientable hyperbolic 3-manifold, volume is bounded below by $w=0.9427$ \cite{GabMeyMil}, so using Corollary \ref{maincor} we can take 
$$m>(3\cosh^2(\Lambda) +1)\ln(2 \pi \Lambda \sinh^2(p\Lambda)/w)$$
For even dimensional closed hyperbolic manifolds, Hopf's generalised Gauss Bonnet formula gives us $vol(M)= (-1)^{n/2} vol(\S^n) \chi(M)/2$ where $\chi(M)$ is the Euler characteristic of $M$, so using Corollary \ref{maincor} again we can take 
$$m > \max(2^{n+1},(n\cosh^{n-1}(\Lambda) +1) \ln(2\Lambda \sinh^{n-1}(p\Lambda)/\pi))$$
In general for closed hyperbolic $n$-manifolds, volume is universally bounded below by $vol(\S^{n-1})/(n(n+3)^n \pi^{n(n-1)})$ \cite{Kel}. As $vol(\S^n)/(\pi vol(\S^{n-1})) < 1$ for all $n>1$, we can take 
$$m > \max(2^{n+1}, (n\cosh^{n-1}(\Lambda) +1) \ln(\Lambda \sinh^{n-1}(p\Lambda) n (n+3)^n \pi^{n(n-1)}))$$
\end{remark}



We must point out that as Pachner moves are combinatorial in nature, the intermediate triangulations we obtain may not be geometric. But as they are just local combinatorial operations, such a bound gives a naive algorithm to check if given hyperbolic or low dimensional spherical manifolds are isometric.

\begin{corollary}\label{isomcor}
Let $(M, K_M)$ and $(N, K_N)$ be geometrically triangulated closed hyperbolic manifolds of dimension at least $3$ or closed spherical manifolds of dimension at most $6$ and edge length at most $\pi/2$. Let $p$ and $q$ be the number of $n$-simplexes in $K_M$ and $K_N$ respectively. Let $\lambda$ and $\Lambda$ be a lower and upper bound on the lengths of edges of $K_M$ and $K_N$. Then $M$ is isometric to $N$ if and only if the $2^{n+1}$-th barycentric subdivisions of $K_M$ and $K_N$ are related by $f(n, p, q,  \Lambda, \lambda)$ Pachner moves followed by a simplicial isomorphism, with $f$ as defined in Corollary \ref{maincor2}.
\end{corollary}

We show that geometric triangulations can be related by geometric Pachner moves in \cite{KalPha2} using simplicial cobordisms. We follow a different approach in this article using shellings and relating via combinatorial Pachner moves instead as it leads to a tighter bound.

\subsection{Outline of Proof}
Given geometric triangulations $K_1$ and $K_2$ of $M$, we first take repeated barycentric subdivisions until each simplex lies in a strongly convex ball. This is where we crucially use the upper length bound on the edges to handle tall thin 'needle-shaped' tetrahedra. Next we consider the geometric polyhedral complex $K_1 \cap K_2$ obtained by intersecting the simplexes of $K_1$ and $K_2$, which we further subdivide to a common geometric subdivision $K'$. As simplexes of $K_1$ and $K_2$ are strongly convex they intersect at most once, which gives a bound on the number of simplexes in $K'$.

While every simplex of $K'$ lies in some simplex of $K_i$, to see that it is in fact a simplicial subdivision (and hence $K_i$ are PL-equivalent) we would need an embedding of $K_i$ in some $\R^n$ which is linear on both simplexes of $K_i$ and of $K'$. For example, there exists a simplicial topological triangulation of a 3-simplex $\Delta$ which contains a trefoil with just 3 edges in its 1-skeleton \cite{Lic2}. As the stick number of a trefoil is 6, $K'$ is a topological triangulation but not a simplicial subdivision of $\Delta$. For constant curvature manifolds however, there do exist local embeddings in $\R^n$ which are linear on both $K_i$ and $K'$. This allows us to treat geometric subdivisions of geometric simplexes in the manifold as simplicial subdivisions of linear simplexes in $\R^n$.

Shelling of a triangulated polytope, introduced in the seminal 1971 paper of Bruggesser and Mani\cite{BruMan}, is a way to inductively remove simplexes $\sigma_i$ from the triangulation such that at each stage $\del \sigma_i$ intersects what remains in a pure (n-1)-dimensional complex. It is easy to see that 2 dimensional polytopes are shellable. Higher dimensional PL polytopes are not in general shellable. The earliest example of nonshellable topological subdivisions of 3-polytopes were given by Newman\cite{New} way back in 1926. Later Rudin\cite{Rud} showed that even linear subdivisions of a 3-simplex may not be shellable. For spheres, Lickorish\cite{Lic2} has given several examples of unshellable triangulations. These examples illustrate that even in the simplest of cases, the property of shellability may not hold. Recently though Adiprasito and Benedetti\cite{AdiBen} have shown that linear subdivisions of convex polytopes are shellable up to subdivision.
\begin{theorem}[Theorem A of \cite{AdiBen}]\label{AdiBenThm}
If $C$ is any subdivision of a convex polytope, the second derived subdivision of $C$ is shellable. If $\dim C=3$, already the first derived subdivision of $C$ is shellable.
\end{theorem}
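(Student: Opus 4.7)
The plan is to combine the classical Bruggesser--Mani rocket shelling of the ambient polytope with a stellar/derived subdivision argument that locally repairs any obstruction to extending the shelling across a subdivided facet. Fix a convex polytope $P$ and a subdivision $C$ of $P$. First I would choose a generic line $\ell$ transverse to all facets of $P$ and push a viewpoint along $\ell$: this orders the facets $F_1,\dots,F_N$ of $P$ so that for each $k$ the complex $F_1\cup\cdots\cup F_k$ is a shellable ball (this is the Bruggesser--Mani construction applied to $P$, not to $C$). The goal is then to build a shelling of the derived subdivision $C'$ (or $C''$) by induction on $k$: assuming we have already shelled $\mathrm{sd}(F_1\cup\cdots\cup F_{k-1})\cap C'$, extend the shelling across the simplexes of $C'$ that lie in the interior of $F_k$.

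The main technical step is this inductive extension. The obstruction is that the restriction of $C$ to $F_k$ and to $\partial F_k\cap(F_1\cup\cdots\cup F_{k-1})$ need not match up in a shellable way, so one cannot directly shell the new facet. I would handle this by exploiting the fact that every simplex of the derived subdivision is a join $\sigma_0*\sigma_1*\cdots*\sigma_r$ coming from a chain in the face poset of $C$: this rigid product structure allows local stellar moves to be performed in a controlled order. Specifically, one shells in the order dictated by distance from the viewpoint in $\ell$, and uses the derived subdivision to interpolate between consecutive layers, so that each new simplex attaches along a pure codimension one subcomplex of its boundary. The step where the derived subdivision is really necessary is exactly where two subdivided facets of $P$ meet: the rank-valued labelling provided by the barycenters lets one sort the incoming simplexes into a compatible order.

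For general dimension two derived subdivisions are needed because after one derived subdivision the matching codimension one faces between adjacent facets may still fail to be shellable in codimension three and higher; a second derived subdivision turns these obstructions into codimension two subcomplexes of shellable balls, which are automatically shellable. In dimension three, one derived subdivision already suffices because all codimension one interfaces are at most two dimensional, and any triangulated $2$-ball is shellable (a classical result). So the $\dim C=3$ statement drops out as a corollary of the same construction with one fewer layer of derived subdivision.

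I expect the main obstacle to be making the inductive extension genuinely \emph{local}: one needs a lemma saying that given any two shellable balls $B_1,B_2$ meeting in a common subcomplex $A\subset\partial B_1\cap\partial B_2$, the derived subdivision of $B_1\cup B_2$ admits a shelling that proceeds through $\mathrm{sd}(B_1)$ first and then through $\mathrm{sd}(B_2)$, regardless of the combinatorics of $A$. Proving this gluing lemma, and controlling how many derived subdivisions it costs, is essentially the whole content of the theorem; the outer rocket-shelling induction is just the scaffolding. Everything else (the bound on the number of derived subdivisions, the special behavior in dimension three) is then a bookkeeping consequence of how often one invokes this gluing lemma along the Bruggesser--Mani order.
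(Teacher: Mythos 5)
This statement is not proved in the paper at all: it is quoted verbatim as Theorem A of Adiprasito--Benedetti and used as a black box, so there is no in-paper argument for your sketch to be compared with; what you are attempting is a re-proof of a substantial external theorem. Judged on its own terms, your outline has a genuine gap, and you name it yourself: the entire difficulty is delegated to the ``gluing lemma'' that you admit is ``essentially the whole content of the theorem.'' The Bruggesser--Mani rocket shelling of $P$ is indeed standard, but it only orders the facets of $P$; it says nothing about how the subdivision $C$ sits inside each facet or across pairs of facets, and that is exactly where shellability fails. Worse, the gluing lemma as you state it (two shellable balls $B_1,B_2$ meeting along an arbitrary subcomplex $A\subset\partial B_1\cap\partial B_2$, with $\mathrm{sd}(B_1\cup B_2)$ shellable ``regardless of the combinatorics of $A$'') is false: if $A$ is a single vertex, $B_1\cup B_2$ is a wedge of balls, its derived subdivision is again a wedge, and no shelling can cross the wedge point, since the first facet on the $B_2$ side would attach along a complex that is not pure of codimension one. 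Even restricting to codimension-one interfaces, ``regardless of the combinatorics of $A$'' is precisely the territory of Rudin's nonshellable linear subdivision of a $3$-simplex and Lickorish's knot-based obstructions, which show that no purely formal ordering argument can work; one must prove a quantitative statement about what one derived subdivision actually buys, and your outline contains no such argument. Note also that Rudin's example already defeats the very first step of your induction: the restriction of $C'$ to the single facet $F_1$ need not be shellable, so the problem is not only at interfaces between facets.

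Two further assertions in the sketch are unsupported or wrong. The explanation of why two derived subdivisions suffice in general --- that the remaining obstructions become ``codimension two subcomplexes of shellable balls, which are automatically shellable'' --- is not a true principle: subcomplexes of shellable complexes need not be shellable, and there is no codimension-two automatic-shellability statement to appeal to. The dimension-$3$ remark that every triangulated $2$-ball is shellable is correct, but by itself it only addresses the interfaces, not the interior of each subdivided facet, so it does not yield the $\dim C=3$ case either. For comparison, the actual proof of Adiprasito--Benedetti does not proceed by a single boundary-gluing lemma along a rocket shelling of $P$; it is a considerably longer induction on dimension through intermediate structural results about derived subdivisions and relative notions of shellability/constructibility. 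As it stands, your proposal is scaffolding around the theorem rather than a proof of it.
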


We first take repeated barycentric subdivisions to make the link of every simplex of $K$ shellable. Given a geometric subdivision $\alpha K$ of $K$, we next define partial barycentric subdivisions $\beta^\alpha_r K$ by putting the given subdivision $\alpha A$ on simplexes $A$ of dimension at most $r$ and the barycentric subdivision $\beta A$ on the rest.  By Theorem \ref{AdiBenThm}, $\alpha A$ is shellable up to subdivisions and as link of $A$ in $\beta^\alpha_r K$ is also shellable so we can extend shellability to 'star neighbourhoods' of $\alpha A$ in $\beta^\alpha_r K$. When a polytope is shellable it is easy to see that it is starrable, i.e., there exists a sequence of Pachner moves which takes the subdivision of a star neighbourhood to the cone over its boundary. Using this, we get a sequence of Pachner moves which takes a star neighbourhood of $\alpha A$ to a cone on its boundary and varying $A$ over all $r$ simplexes of $K$, a sequence of moves from $\beta^\alpha_r K$ to $\beta^\alpha_{r-1} K$. This gives a sequence of moves from $\beta^\alpha_n K = \alpha K$ to $\beta^\alpha_0 K = \beta K$ (as in Figure \ref{Moves}). Taking $\alpha K$ as the common geometric subdvision $K'$ of $K_1$ and $K_2$, we get a sequence of moves from $\beta K_1$ to $\beta K_2$ of controlled length as required.

\subsection{Organisation of paper.}
In Section \ref{linksec} we fix notation, give the necessary definitions of PL topology and prove that after taking sufficiently many barycentric subdivisions the link of a simplex of a geometric triangulation is shellable. In Section \ref{subdivsec} we intersect the two given geometric triangulations to obtain a common geometric subdivision. By repeatedly taking barycentric subdivisions we ensure that all simplexes are strongly convex. This allows us to control the number of simplexes in this common subdivision. We combine the shellability of links obtained in Section \ref{linksec} with the shellability of subdivisions of simplexes obtained via Theorem \ref{AdiBenThm} to obtain a sequence of Pachner moves of controlled length that changes the star neighbourhood of simplexes to cones over their boundaries. This leads to a proof of Theorem \ref{mainthm}. Section \ref{scalingsec} involves extensive application of Euclicean, spherical and hyperbolic trigonometry to calculate the factor by which barycentric subdivisions scale simplexes.

\section{Links of simplexes}\label{linksec}
In this section we fix notation and prove results about links of partial barycentric subdivisions of simplicial complexes. The main result of this section is that after taking sufficiently many barycentric subdivisions, the link of every simplex of a geometric triangulation is shellable. Books by Rourke and Sanderson\cite{RouSan} and Ziegler\cite{Zie2} are good sources of introduction to the theory of piecewise linear topology.

A \emph{simplicial complex} contains a finite set $K^0$ (the vertices) and a family $K$ of subsets of $K^0$ (called simplexes) such that if $B \subset A \in K$ then $B \in K$. If the maximal simplex is of size $n+1$, we call $n$ the dimension of $K$. A \emph{simplicial isomorphism} between two complexes is a bijection between their vertices which induces a bijection between their simplexes. A \emph{realisation} of a simplicial complex $K$ is a subspace $|K|$ of some $\R^N$, where the vertices of each simplex are affinely independent and represented by the linear simplex which is their convex hull. We call $K$ a \emph{triangulation} of a manifold $M$ when there exists a homeomorphism from a realisation $|K|$ of $K$ to $M$. The simplexes of this triangulation are the images of simplexes of $|K|$ via this homeomorphism. Every simplicial complex has a realisation in $\R^{N}$ where $N$ is the size of $K_0$, by representing $K_0$ as a basis of $\R^N$. Any two realisations of a simplicial complex are simplicially isomorphic. For $A$ a simplex of $K$, we denote by $\del A$ the boundary complex of proper faces of $A$. When the context is clear, we shall use the same symbol $A$ to denote the simplex and the simplicial complex $A \cup \del A$.

\begin{definition}
Let $C$ be a subset $C$ of a Riemannian manifold $M$. We call $C$ \emph{convex}, if any two points in $C$ are connected by a unique geodesic in $C$. We call $C$ \emph{star convex} if there exists a point $p$ in interior of $C$ such that there is a unique  geodesic in $C$ from $p$ to any point in $\del C$. We call it \emph{strongly convex}, if any two points in $C$ are connected by a unique minimising geodesic in $M$ which also happens to lie entirely in $C$.
\end{definition}

\begin{definition}
For $A$ and $B$ simplexes of a simplicial complex $K$, we denote their \emph{join} $A\star B$ as the simplex $A \cup B$. The join operation makes sense when $A$ and $B$ are totally geodesic disks in a convex set in a constant curvature manifold, as the join of totally geodesic disks give a totally geodesic disk. The \emph{link} of a simplex $A$ in a simplicial complex $K$ is defined by $lk (A, K) = \{ B \in K : A \star B \in K \}$. The (closed) \emph{star} of $A$ in $K$ is defined by $st(A, K)=A \star lk (A, K)$.
\end{definition}

\begin{figure}
\centering
\includegraphics[width=0.6\textwidth]{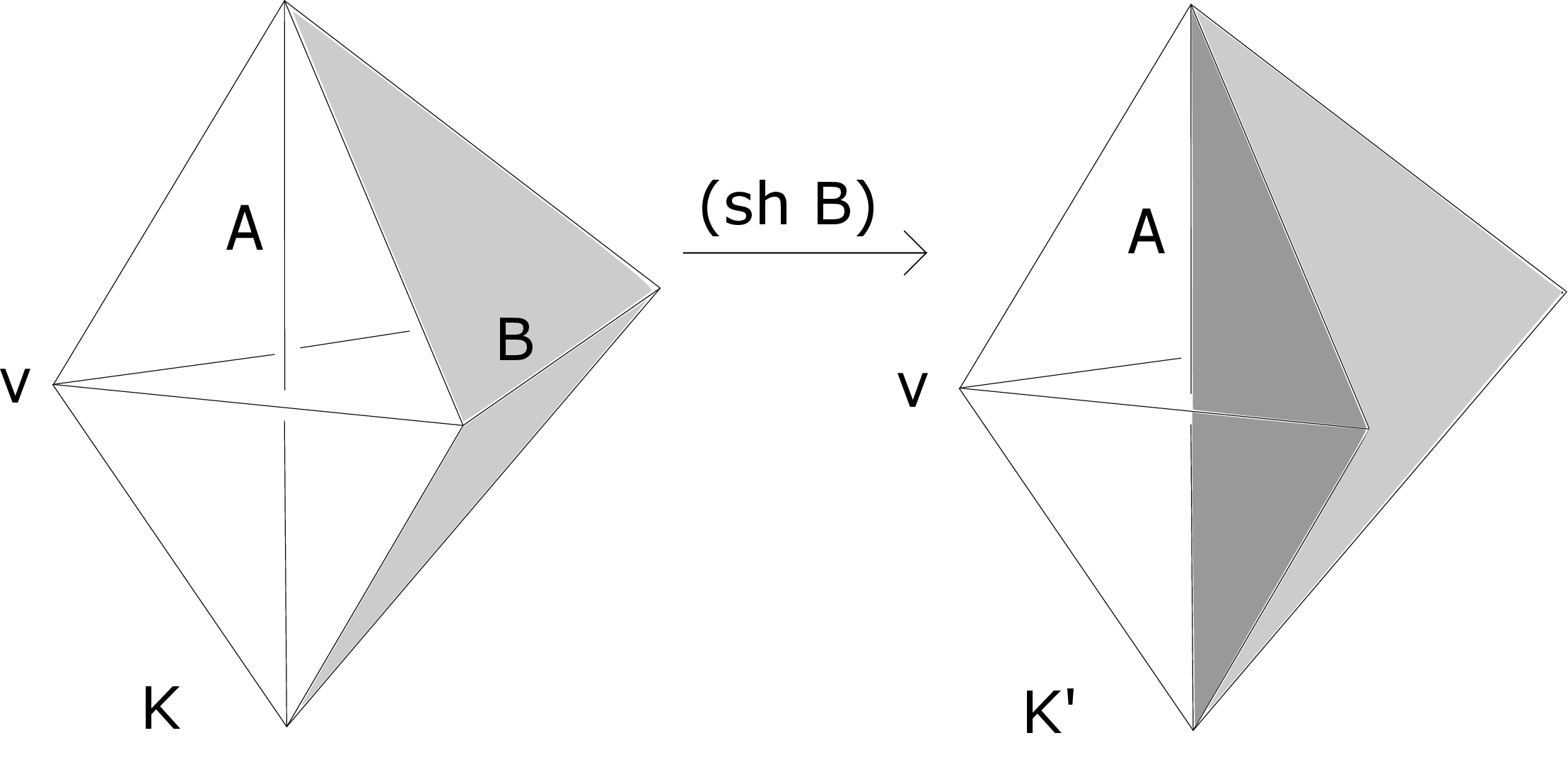}
\caption{$B \star \del A \subset \del K$ which shells along $B$ to a complex $K'$ by removing $A\star B$ from $K$ and taking closure.}\label{shelling}
\end{figure}


\begin{definition}\cite{Lic}
Suppose that $A$ and $B$ are simplexes of a simplicial triangulation $K$ of an $n$-manifold $M$ with boundary $\del M$, that $A \star B$ is an $n$-simplex of $K$, that $A \cap \del M = \del A$ and that $B\star \del A \subset \del K$. Then the simplicial complex $K'$ obtained from $K$ by elementary shelling along $B$ is the closure of $K \setminus (A \star B)$. Closure here means adding the simplexes of $A \star \del B$. The relation between $K$ and $K'$ will be denoted by $K \xrightarrow{(sh B)} K'$. See Figure \ref{shelling} for an example. An $n$-ball is said to be \emph{shellable} if it can be reduced to an $n$-simplex by a sequence of elementary shellings. An $n$-sphere is shellable if removing some $n$-simplex from it gives a shellable $n$-ball. 
\end{definition}

\begin{definition}
Suppose that $A$ is an $r$-simplex in a simplicial complex $K$ of dimension $n$ and that $lk (A, K) = \del B$ for some $n-r$ simplex $B \notin K$. Then the \emph{Pachner move} $\kappa(A, B)$ consists of changing $K$ by removing $A \star \del B$ and inserting $\del A \star B$. See Figures \ref{Pac2} and \ref{Pac3} for the possible $2$ and $3$-dimensional Pachner moves.
\end{definition}

\begin{figure}
\centering
\includegraphics[width=1\textwidth]{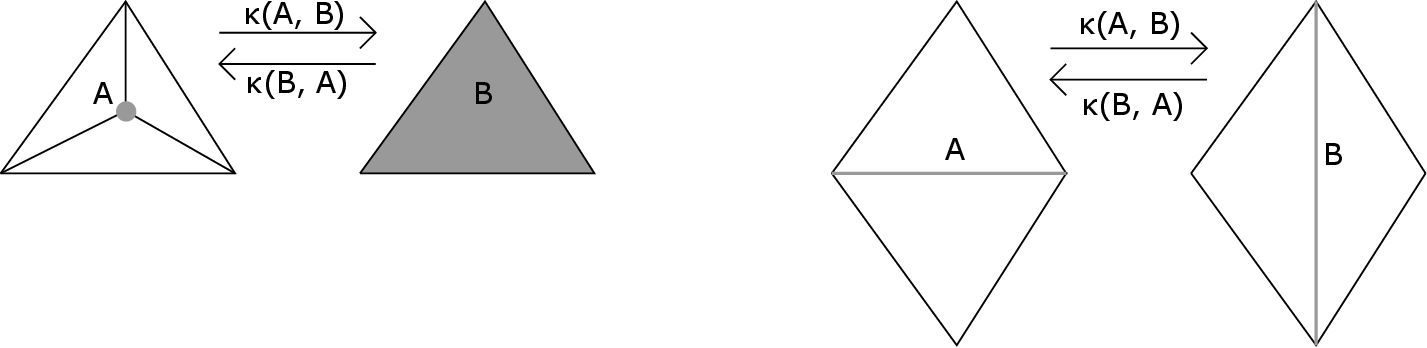}
\caption{2-dimensional Pachner moves}\label{Pac2}
\end{figure}

\begin{figure}
\centering
\includegraphics[width=1\textwidth]{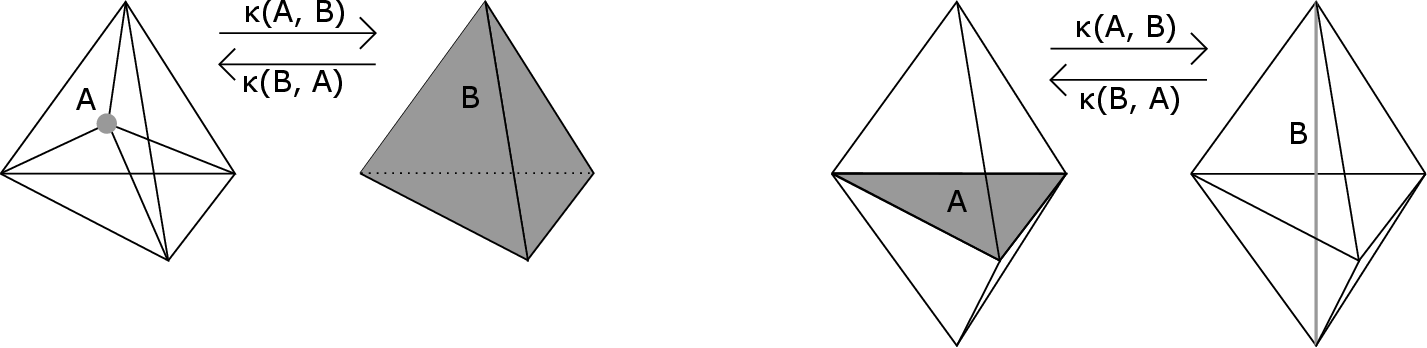}
\caption{3-dimensional Pachner moves}\label{Pac3}
\end{figure}

\begin{definition}
We say that an $n$-ball $K$ is \emph{starrable} if there exist a finite sequence of Pachner moves from $K$ to $p \star \del K$ for some point $p\in int(K)$.
\end{definition}

We reproduce the proof of the statement that shellable balls are starrable from \cite{Lic} for completeness and to record the number of Pachner moves required in the starring process.

\begin{lemma}[Lemma 5.7 of \cite{Lic}] \label{shellable}
Let $K$ be a shellable triangulation of an $n$-ball with $r$ many $n$-simplexes, then  $K$ is starrable by a sequence of $r$ Pachner moves.
\end{lemma}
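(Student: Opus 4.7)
My plan is to induct on $r$, the number of top simplexes of $K$. For the base case $r=1$, $K$ is a single $n$-simplex $\sigma$, and $v \star \partial K = v \star \partial \sigma$. The Pachner move $\kappa(v,\sigma)$ is available because $lk(v, v \star \partial \sigma) = \partial \sigma$ and $\sigma \notin v \star \partial \sigma$; it removes $v \star \partial \sigma$ and inserts $\sigma$, turning $v \star \partial K$ into $K$ in a single move.

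For the inductive step, I use shellability to peel off one top simplex at a time from the cone. Let $\sigma = A \star B$ be the first top simplex removed by some shelling of $K$, and let $K_1 = K \setminus \sigma$ be the resulting shellable $n$-ball, which has $r-1$ top simplexes. By the definition of elementary shelling, $B \star \partial A \subset \partial K$ and $A \star \partial B \subset \partial K_1$, so $K = K_1 \cup \sigma$ and $\partial K_1 = (\partial K \setminus B \star \partial A) \cup (A \star \partial B)$, glued along $\partial A \star \partial B$.

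The first Pachner move I perform on $v \star \partial K$ is $\kappa(v \star B, A)$. To verify that this is a legal move, I compute $lk(v \star B, v \star \partial K)$: a simplex $C$ disjoint from $v \star B$ lies in this link iff $B \star C \in v \star \partial K$, and since $v$ appears in no simplex of $\partial K$ this reduces to $C \in lk(B, \partial K)$. The shelling condition, together with $A \star B$ being the unique top simplex containing $B$ as a free face, forces $lk(B, \partial K) = \partial A$. One also has $A \notin v \star \partial K$ since $v \notin A$ and $A \not\subset \partial K$. The move therefore removes $(v \star B) \star \partial A$ and inserts $\partial(v \star B) \star A = (A \star B) \cup (v \star A \star \partial B)$. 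A direct comparison of top-dimensional $n$-simplexes shows the result is precisely $v \star \partial K_1 \cup \sigma$.

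Finally, by the inductive hypothesis, $v \star \partial K_1$ is related to $K_1$ by $r-1$ Pachner moves. Both endpoints share the same boundary $\partial K_1$, so these moves are interior to $v \star \partial K_1$ and extend verbatim to the enlarged complex $v \star \partial K_1 \cup \sigma$ without disturbing $\sigma$, which remains attached along $A \star \partial B \subset \partial K_1$ throughout. The end result is $K_1 \cup \sigma = K$, giving $1 + (r-1) = r$ Pachner moves in total. The main delicate point is the first move: both the link calculation and the identification of the post-move complex with $v \star \partial K_1 \cup \sigma$ require tracking which simplexes lie on which of the two hemispheres of $\partial((v \star B) \star A) = (v \star B) \star \partial A \cup \partial(v \star B) \star A$, but once that bookkeeping is done cleanly the rest is a straightforward induction.
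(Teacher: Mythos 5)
Your proof is correct and follows essentially the same route as the paper: induct on the number of shelling steps, peel off the first shelled simplex $A\star B$, and relate the cone to the smaller complex by the single move $\kappa(A, v\star B)$ (you apply its inverse, $\kappa(v\star B, A)$, since you run the sequence from $v\star\del K$ to $K$ rather than the other way, which is immaterial as Pachner moves are reversible). Your extra verification that $lk(v\star B, v\star\del K)=\del A$ and the bookkeeping of the two hemispheres simply spell out details the paper leaves implicit.
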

\begin{proof}
We prove this by induction on the number $r$ of $n$-simplexes of $K$. If $r=1$, then $K$ is a $n$-simplex $B$ and let $A=p$ be a point in the interior of $B$. A single Pachner move $\kappa(B, A)$ changes $K$ to $p\star \del K$, as in the diagrams on the left in Figures \ref{Pac2} and \ref{Pac3}.

Suppose that the first elementary shelling of $K$ is $K \xrightarrow{(sh B)} K_1$, where $A \star B$ is a $n$-simplex of $K$, $A \cap \del K = \del A$ and $B \star \del A \in \del K$ (see Figure \ref{shelling}).  By the induction on $r$, $K_1$ is simplicially isomorphic to  $p\star \del K_1$ after at most $r-1$ Pachner moves. Observe that $p\star \del K_1 \cup A\star B$ is changed to $p\star \del K$ by the single Pachner move $\kappa (A, p\star B)$. 
\end{proof}

\begin{figure}
\centering
\includegraphics[width=0.9\textwidth]{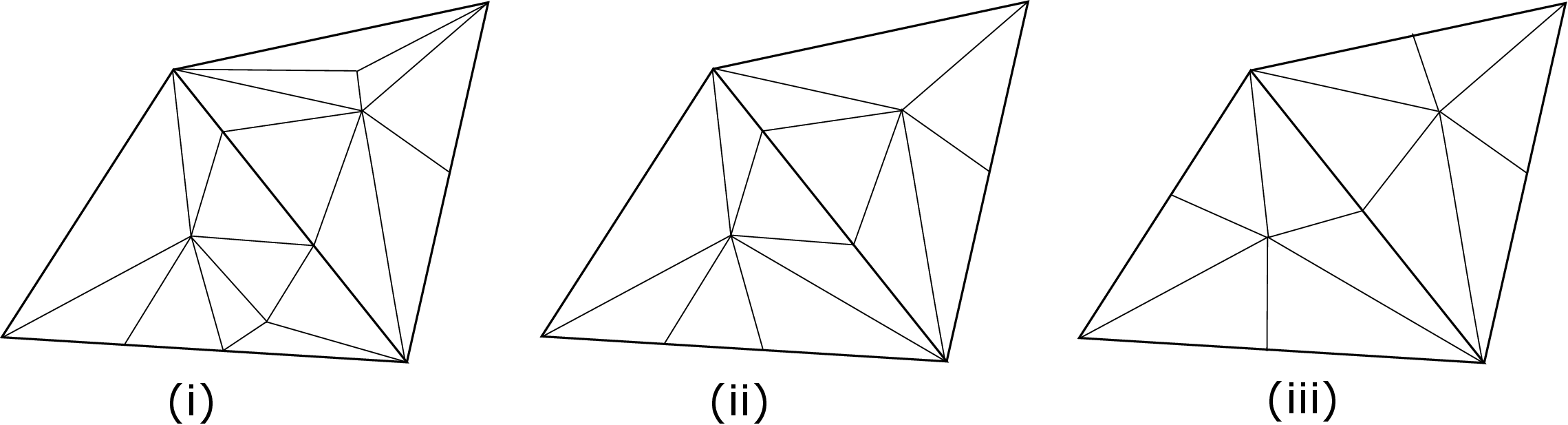}
\caption{In this example $K$ is the simplicial complex consisting of $2$ triangles in solid lines, (i) $\beta^\alpha_2 K =\alpha K$ is a subdivision of $K$, (ii) $\beta^\alpha_1 K$ and (iii) $\beta K$ is the barycentric subdivision of $K$.}\label{Moves}
\end{figure}

\begin{definition}\label{barydefn}
Let $\alpha K$ be a geometric subdivision of $K$. Let $\beta^\alpha_r K$ be the geometric subdivision of $K$ such that, if $A$ is a simplex in $K$ and $\dim (A) \leq r$, then $\beta^\alpha_r A = \alpha A$ and if $\dim (A) > r$ then $\beta^\alpha_r A = a \star \beta^\alpha_r \del \alpha A$ for some point $a \in int(A)$, i.e. it is subdivided as the geometric cone on the already defined subdivision of its boundary. In other words, fix a point $a \in int(A)$ and for each  simplex $B \in \beta_r \del \alpha A$, introduce the geometric simplex $a\star B$ by taking the union of geodesics in $A$ which start at $a$ and end at some point in $B$. When $M$ is Euclidean or hyperbolic then $A$ is strongly convex and when $M$ is spherical we can choose the point $a\in int(A)$ such that the distance in $A$ between $a$ and $\del A$ is less than $\pi$ so in either case there is a unique geodesic in $A$ between $a$ and every point in $B$. The geometric join of a point with a totally geodesic disk is again a totally geodesic disk, so $\beta^\alpha_r K$ is also a geometric triangulation of $M$. Observe that $\beta^\alpha_n K$ is $\alpha K$ while $\beta^\alpha_0 K = \beta K$ is the \emph{geometric barycentric subdivision} of $K$. See Figure \ref{Moves} for an example. When $\alpha K = K$, we denote $\beta^\alpha_r K$ by $\beta_r K$ and call it a \emph{partial barycentric subdivision}.
\end{definition}

The following lemma relates the links of simplexes in a partial barycentric subdivision with the barycentric subdivision of the links in the original simplicial complex, as can be seen in Figure \ref{barylinkfig}.
\begin{lemma}\label{barylink}
Let $A$ be an $r$-simplex in a simplicial complex $K$. Then $lk(A, \beta_r K)$ is simplicially isomorphic to $\beta lk(A, K)$.
\end{lemma}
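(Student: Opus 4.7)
The plan is to prove this via an explicit combinatorial description of the simplices of $\beta_r K$, from which the isomorphism on links will be nearly tautological.

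First I would establish the following structural claim about $\beta_r K$: every simplex of $\beta_r K$ has the form $F \cup \{\hat B_1, \ldots, \hat B_j\}$, where $F$ is a (possibly empty) simplex of $K$ with $\dim F \leq r$, the $B_i$ form a (possibly empty) strictly ascending chain $B_1 \subsetneq B_2 \subsetneq \cdots \subsetneq B_j$ of simplices of $K$ each of dimension greater than $r$, and $F \subseteq B_1$ whenever $j \geq 1$. This is proved by induction on the simplices of $K$ using Definition \ref{barydefn}. For a simplex $B$ of $K$ with $\dim B \leq r$ the formula $\beta_r B = B$ gives only faces of $B$ (the case $j = 0$). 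For $\dim B > r$, the recursive description $\beta_r B = \hat{B} \star \beta_r \partial B$ and the inductive hypothesis applied to the faces of $B$ yield exactly the stated form, either by picking up the new cone vertex $\hat B$ (extending the chain by $B$) or by lying already in $\beta_r \partial B$.

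With this description in hand, identifying $lk(A, \beta_r K)$ is straightforward. A simplex $D$ lies in the link iff $A \cup D$ is a simplex of $\beta_r K$ and $A \cap D = \emptyset$. Because every vertex of $A$ is an original vertex of $K$ and none of the barycenters $\hat B_i$ are vertices of $K$, the subset $A$ must sit inside the face-part $F$ of $A \cup D$. But the face-part satisfies $\dim F \leq r = \dim A$ together with $A \subseteq F$, forcing $F = A$. Therefore
\[
D = \{\hat B_1, \ldots, \hat B_j\} \quad \text{with} \quad A \subsetneq B_1 \subsetneq \cdots \subsetneq B_j
\]
a chain of simplices of $K$ strictly containing $A$.

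Finally I would define the simplicial isomorphism $\varphi \colon \beta\, lk(A, K) \to lk(A, \beta_r K)$ on vertices by $\hat C \mapsto \widehat{A \star C}$. Since every simplex $B$ of $K$ with $A \subsetneq B$ can be written uniquely as $A \star C$ with $C \in lk(A, K)$ nonempty, and conversely every nonempty $C \in lk(A, K)$ produces such a $B$, this is a vertex bijection. Because $B_i = A \star C_i$ gives $B_1 \subsetneq \cdots \subsetneq B_j$ if and only if $C_1 \subsetneq \cdots \subsetneq C_j$, the map carries simplices of $\beta\, lk(A, K)$ (which are precisely such chains) bijectively onto simplices of $lk(A, \beta_r K)$, yielding the desired simplicial isomorphism.

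The only real obstacle is the inductive description of simplices in Step 1; once that bookkeeping is set up, the dimension constraint $\dim A = r$ meshes exactly with the threshold $r$ in the partial subdivision to force $F = A$, and everything else is automatic.
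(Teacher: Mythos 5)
Your proof is correct, and it takes a genuinely different route from the paper. The paper defines the same vertex correspondence (the cone point of $\beta_r(A\star B)$ corresponds to the barycenter of $B\in lk(A,K)$) but verifies that it is simplicial by induction on the skeleta of $\beta\, lk(A,K)$, repeatedly using the recursive cone structure $\beta_r(A\star B)=b'\star\beta_r\del(A\star B)$ to compute $lk(A,\beta_r(A\star B))=b'\star lk(A,\beta_r(A\star\del B))$ and splice the isomorphism together one dimension at a time. You instead front-load the work into a global normal form for the simplices of $\beta_r K$ (a face $F$ of $K$ of dimension at most $r$ together with barycenters of a strictly ascending chain of simplices of dimension greater than $r$ containing $F$), which generalizes the order-complex description of the barycentric subdivision; the link computation and the isomorphism then become a transparent chain correspondence $C_1\subsetneq\cdots\subsetneq C_j \leftrightarrow A\star C_1\subsetneq\cdots\subsetneq A\star C_j$. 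Your approach is arguably cleaner and makes the role of the threshold $r$ (forcing $F=A$) completely explicit, at the cost of the bookkeeping induction in Step 1; the paper's approach avoids naming the chain structure but pays with a somewhat more delicate skeleton-by-skeleton argument about joins and links. One small point to make explicit in a full write-up: you need the structural claim as an exact characterization (both that every simplex of $\beta_r K$ has the stated form and that every such face-plus-chain is a simplex of $\beta_r K$), since the converse direction is what guarantees that every barycenter chain over $A$ really occurs in $lk(A,\beta_r K)$; the same induction on Definition \ref{barydefn} delivers both directions, so this is a matter of statement rather than a gap.
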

\begin{proof}
Observe that as $A$ is $r$-dimensional, $\beta_r A = A$ and we can take $A$ to be a simplex of both $\beta_r K$ and $K$.

Let $B$ be a simplex in $lk(A, K)$. The barycentric subdivision $\beta B$ of $B$ is given by $b\star\beta \del B$. So the vertices of $\beta lk(A, K)$ are exactly such points $b$, one for each simplex $B$ in $lk(A, K)$. As $A\star B$ has dimension greater than $r$, so $\beta_r (A\star B) = b'\star \beta_r(\del (A\star B))$. And as $A$ is unchanged by $\beta_r$, so $A \in \beta_r(\del (A \star B))$ and consequently $b' \star A \in \beta_r(A \star B) \subset \beta_r K$. So given $B \in lk(A, K)$, we obtain a vertex $b'$ of $lk(A, \beta_r K)$. Conversely, given a vertex $b'$ of $lk(A, \beta_r K)$, $b'\star A$ is a simplex in $\beta_r K$ of dimension more than $r$. So there exists some $B \in lk(A, K)$ such that $\beta_r(A\star B) = b'\star \beta_r (\del (A\star B))$.

Define $\phi$ as this bijection from the vertex set of $\beta lk(A, K)$ to the vertex set of $lk(A, \beta_r K)$ which sends the vertex $b$ corresponding to $B\in lk(A, K)$ to the vertex $b'$ of $\beta_r(A\star B)$. We shall extend $\phi$ inductively to a simplicial isomorphism from $\beta lk(A, K)$ to $lk(A, \beta_r K)$. See Figure \ref{barylinkfig} for the case when $K=A\star B$ and $r=1$.

As $\phi$ is a bijection on the vertices it is a simplicial isomorphism on the $0$-skeleton of $\beta lk(A, K)$. Let $B \in lk(A, K)$ be $m$ dimensional and assume that $\phi$ is a simplicial isomorphism on the $m-1$ skeleton of $\beta lk(A, K)$. As $\beta_r(A\star B) = b' \star \beta_r \del(A \star B) = (b'\star\beta_r (\del A \star B)) \cup (b'\star \beta_r(A \star \del B))$ so each simplex of $\beta_r(A\star B)$ lies entirely in $(b'\star \beta_r (\del A\star B))$ or $(b'\star\beta_r(A\star \del B))$ (or both). So if $A\star C \in \beta_r(A\star B)$ then as $A$ belongs to $b'\star\beta_r(A\star \del B)$ and $A \notin b' \star \beta_r(\del A \star B)$ therefore $C$ belongs to it as well, and we get $lk(A, \beta_r(A\star B))=lk(A, b'\star \beta_r(A \star \del B))$. As $A \in \beta_r(A \star \del B)$ so $lk(A, b'\star \beta_r(A \star \del B)) = b'\star lk(A, \beta_r(A\star \del B))$. By assumption, $\phi$ restricted to $\beta(\del B)$ is simplicially isomorphic to $lk(A, \beta_r(A \star \del B))$. So  $\beta B = b\star\beta(\del B)$ is simplicially isomorphic via $\phi$ to $b'\star lk(A, \beta_r(A\star\del B))=lk(A, \beta_r(A\star B))$. Varying $B$ over all $m$-simplexes, shows that $\phi$ is a simplicial isomorphism on the $m$-skeleton of $\beta lk(A, K)$. So by induction taking $m=n$, we get a simplicial isomorphism from $\beta lk(A, K)$ to $lk(A, \beta_r K)$.
\end{proof}

\begin{figure}
\centering
\includegraphics[width=0.3\textwidth]{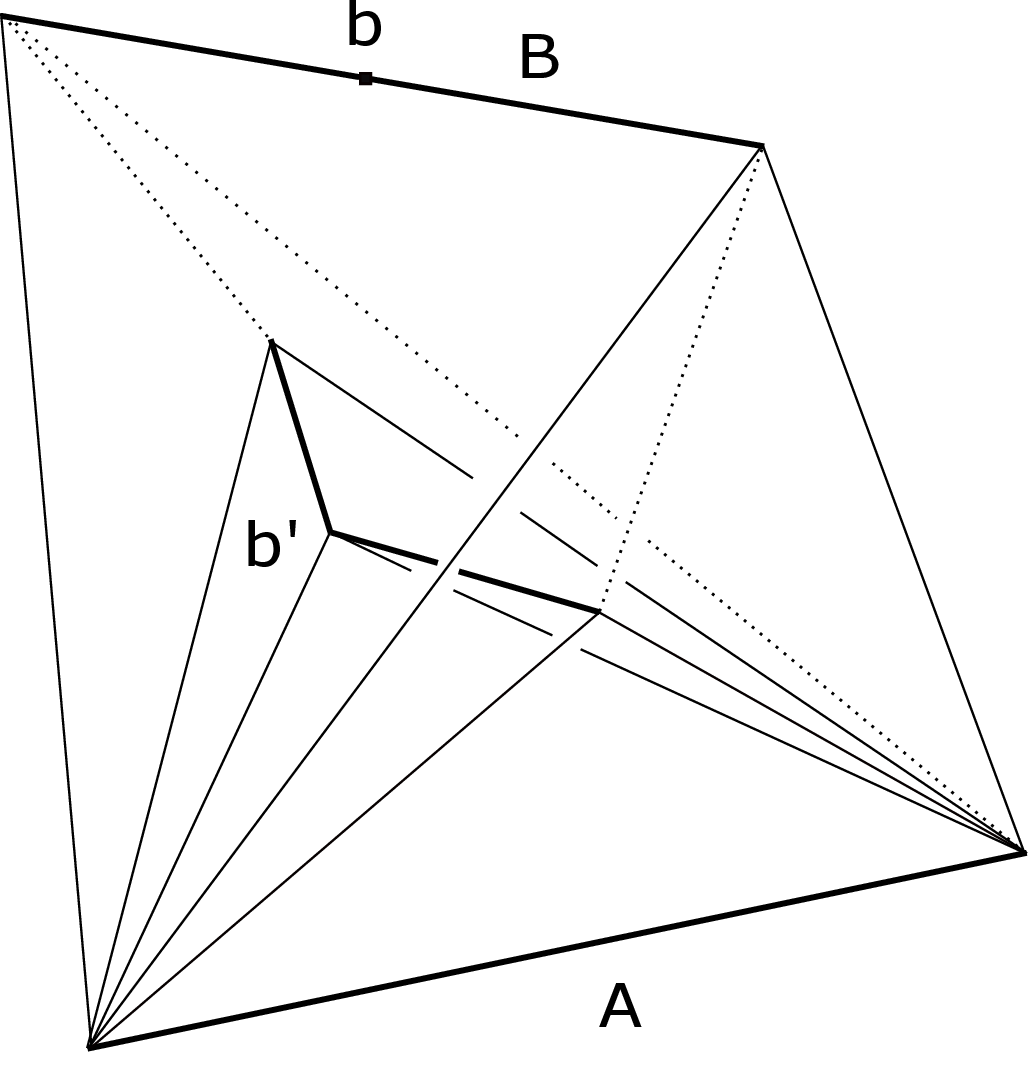}
\caption{When $K= A\star B$, $\beta B$ is isomorphic to $lk(A, \beta_1 K)$}\label{barylinkfig}
\end{figure}

The following result proved in \cite{Zee} for simplicial complexes  also works for spherical triangulations when diameter of each simplex is less than $\pi$ so that any two simplexes intersect at most once:
\begin{lemma}\label{baryrefinement} [Lemma 4, Ch 1 \cite{Zee}]
Let $K$ and $L$ be geometric triangulations of a sphere, with diameter of each simplex less than $\pi$. Then for $s$ the total number of simplexes of $K$, the $s$-th derived subdivision of $L$ is a subdivision of $K$.
\end{lemma}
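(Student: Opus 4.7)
The plan is to mimic Zeeman's original inductive argument, using the geometric convexity supplied by the diameter bound in place of the affine convexity of linear simplices. The hypothesis that each simplex has diameter less than $\pi$ is exactly what guarantees that spherical simplices are convex subsets of open hemispheres, so that the intersection of any two is a single convex cell. This is the property "any two simplexes intersect at most once" flagged in the excerpt, and it is the only geometric input Zeeman's proof uses.

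I would induct on $s$. Enumerate the simplices of $K$ as $A_1, \dots, A_s$ in order of increasing dimension and let $K_j = \{A_1, \dots, A_j\}$. The inductive claim is that the $j$-th derived subdivision $\beta^j L$ subdivides $K_j$ in the sense that every simplex of $\beta^j L$ whose interior meets some $A_i$ with $i \leq j$ is contained in $A_i$. Given this for $j$, passing to $j+1$ amounts to resolving the straddling of $\del A_{j+1}$ by simplices of $\beta^j L$. A single further barycentric subdivision accomplishes this: because $A_{j+1}$ sits inside an open hemisphere, every simplex of $\beta^j L$ meets $A_{j+1}$ in a single convex sub-polytope, and its barycentric refinement splits it along the trace of $\del A_{j+1}$ into pieces each contained in, or disjoint from, the interior of $A_{j+1}$.

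For this local step I would transport the picture into the gnomonic chart of the hemisphere containing $A_{j+1}$: there totally geodesic simplices become ordinary linear simplices, and the one-step subdivision claim reduces to Zeeman's Euclidean lemma. Assembling the local steps, the overall bookkeeping is identical to the original argument: one barycentric subdivision is spent per simplex of $K$, giving $s$ subdivisions in total. The main obstacle is the geometric compatibility at the cone-point level — the gnomonic image of a geodesic barycenter is not literally a linear barycenter, so "barycentric" on $S^n$ does not correspond to "barycentric" after projection. But Zeeman's argument only requires \emph{some} interior cone point, i.e.\ a derived (not strictly barycentric) subdivision, and the property of being a subdivision is combinatorial and hence preserved by the chart. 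So the spherical barycentric subdivision qualifies as a derived subdivision in the gnomonic picture, and the Euclidean statement applies verbatim.
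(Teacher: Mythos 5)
The global structure you describe --- order the simplexes of $K$ by increasing dimension, spend one derived subdivision per simplex, and reduce the local step to the Euclidean case via the gnomonic chart (Lemma \ref{convex}) --- is exactly Zeeman's argument, and the paper itself offers nothing beyond the remark that it transfers to the sphere once simplexes are convex and pairwise intersections are single convex cells. But your local step contains a genuine error: it is not true that the barycentric refinement of a simplex $B$ of $\beta^j L$ "splits it along the trace of $\del A_{j+1}$". The barycentric subdivision places its new vertices at centroids, which bear no relation to $A_{j+1}$; if an edge of $K$ crosses the interior of a $2$-simplex $B$ of $L$, then every iterated barycentric subdivision of $B$ still contains triangles straddling that edge, so no power of the honest barycentric subdivision of $L$ ever subdivides $K$. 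What Zeeman's induction actually requires at stage $j+1$ is the derived subdivision of $\beta^{j}L$ whose cone point for each simplex $B$ is chosen \emph{inside} $int(B)\cap A_{j+1}$ whenever that set is nonempty (and arbitrarily otherwise). Convexity of $B\cap A_{j+1}$ --- which is where the hemisphere/diameter hypothesis enters --- is what guarantees such a point exists and that coning $B$ from it yields simplexes each lying in $A_{j+1}$ or meeting it only along $\del A_{j+1}$, which is already respected since the faces of $A_{j+1}$ precede it in your ordering. This is precisely why the statement says "derived subdivision" rather than "barycentric subdivision".

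Your closing paragraph shows you noticed the derived-versus-barycentric distinction but resolved it in the wrong direction: the freedom in a derived subdivision is not there to absorb the fact that gnomonic projection distorts barycenters, it is there to let you place the new vertices on the skeleton of $K$. An arbitrary derived subdivision (in particular the barycentric one) does not satisfy the conclusion, so "the Euclidean statement applies verbatim" to it is not a valid closing move. Once this is corrected the rest of your outline is sound: the property of subdividing a cell persists under further derived subdivisions, and each local verification takes place inside a single simplex $B$, where Lemma \ref{convex} linearizes both $B$ and $B\cap A_{j+1}$ so that Zeeman's Euclidean lemma applies.
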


\begin{lemma}\label{simplexlink}
Let $K$ be a geometric triangulation of a Riemannian $n$-manifold. For all vertices $v$ of $K$, $\beta^{m-2} lk(v, K)$ is simplicially isomorphic to a subdivision of the boundary of an $n$-simplex, where $m=2^{n+1}$.
\end{lemma}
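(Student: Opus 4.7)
The plan is to invoke Lemma~\ref{baryrefinement} with the role of $K$ played by (a geometric realisation of) the boundary $\del \Delta^n$ of an $n$-simplex and the role of $L$ played by $lk(v, K)$; both are geometric triangulations of $\S^{n-1}$. The boundary $\del \Delta^n$ has $\sum_{k=1}^{n} \binom{n+1}{k} = 2^{n+1} - 2 = m - 2$ simplexes in total (all non-empty proper faces of $\Delta^n$), and Lemma~\ref{baryrefinement} with $s = m - 2$ then says that the $(m-2)$-th derived (i.e. barycentric) subdivision of $lk(v, K)$ is a subdivision of $\del \Delta^n$, which is precisely the stated conclusion.

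The only nontrivial point is checking the diameter hypothesis for both triangulations. For $lk(v, K)$ I would use the standard spherical realisation on the unit tangent sphere at $v$: every geodesic simplex $A$ of $K$ containing $v$ gives a spherical simplex in $lk(v, K)$ whose vertices are the initial unit tangent vectors at $v$ of the edges of $A$ emanating from $v$. Non-degeneracy of $A$ as a geodesic simplex forces these tangent vectors to be linearly independent, hence contained in an open half-space of $T_v M$, so the spherical simplex lies in an open hemisphere of $\S^{n-1}$ and has diameter strictly less than $\pi$. For $\del \Delta^n$ one obvious realisation works: take a regular $n$-simplex centred at the origin of $\R^{n+1}$ and radially project its boundary to $\S^{n}$ (or use any affine embedding into $\R^n$ together with radial projection to $\S^{n-1}$); every proper face of the regular simplex is contained in an open hemisphere, and again the diameter bound holds.

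The main obstacle is nothing beyond this diameter verification; once both triangulations satisfy the hypotheses of Lemma~\ref{baryrefinement}, plugging in $s = 2^{n+1} - 2$ yields the lemma. The subtlety to highlight is that it is the combinatorial conclusion we need — the existence of a simplicial isomorphism from $\beta^{m-2}lk(v, K)$ onto some subdivision of $\del \Delta^n$ — and this follows immediately from Lemma~\ref{baryrefinement} once we have realised both triangulations on the common underlying sphere $\S^{n-1}$ with the required diameter bound.
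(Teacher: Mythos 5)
Your proposal is correct and follows essentially the same route as the paper: realise both $lk(v,K)$ and $\partial\Delta^n$ as geometric triangulations of a round $(n-1)$-sphere (the paper uses a small geodesic sphere in $int(st(v,K))$ with the radial projection, which is the same device as your unit tangent sphere), note that $\partial\Delta^n$ has $2^{n+1}-2=m-2$ simplexes, and apply Lemma~\ref{baryrefinement} with $s=m-2$. Your explicit verification of the diameter-less-than-$\pi$ hypothesis is a welcome detail the paper leaves implicit.
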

\begin{proof}
As $K$ is a geometric triangulation, $|int(st(v, K))|$ is a neighbourhood of $v$. Let $L \subset int(st(v, K))$ be a geometrically triangulated sphere centered at $v$ which is isomorphic to $lk(v, K)$ under radial projection. Let $\delta$ be a spherical triangulation of $|L|$ as the boundary of a spherical $n$-simplex, so in particular $\delta$ has $m-2$ simplexes. By Lemma \ref{baryrefinement}, $\beta^{m-2} L$ is a subdivision of $\delta$ which is isomorphic to the boundary of a simplex.
\end{proof}

\begin{theorem}\label{shellinglink}
Links of all simplexes in $\beta^{m} K$ are shellable for $m=2^{n+1}$.
\end{theorem}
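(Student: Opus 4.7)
I would prove Theorem \ref{shellinglink} by analyzing the link of each simplex $\sigma$ of $\beta^m K$ and reducing to a direct application of Lemma \ref{simplexlink} and Theorem \ref{AdiBenThm}. The cleanest case is that of an original vertex $v$ of $K$, which remains a vertex throughout all iterated barycentric subdivisions. Iterating Lemma \ref{barylink} with $r=0$ (since $\beta_0 K = \beta K$) gives a simplicial isomorphism $lk(v, \beta^m K) \cong \beta^m\, lk(v, K)$. By Lemma \ref{simplexlink}, $\beta^{m-2}\, lk(v, K)$ is simplicially isomorphic to a subdivision of $\partial \Delta^n$, the boundary of the convex polytope $\Delta^n$. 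Writing $\beta^m\, lk(v, K) = \beta^2 (\beta^{m-2}\, lk(v, K))$, Theorem \ref{AdiBenThm} then gives shellability. The value $m = 2^{n+1}$ is chosen so that $m-2 = 2^{n+1}-2$ matches the number of simplexes of $\partial \Delta^n$ (the quantity required by Lemma \ref{baryrefinement}), while the two remaining iterations supply the hypothesis of Theorem \ref{AdiBenThm}.

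For any other vertex $w$ of $\beta^m K$, the same overall argument goes through, with $K$ replaced by an appropriate intermediate barycentric subdivision containing $w$ as a vertex. Concretely, $w$ arises as the barycenter $\hat\tau$ of some simplex $\tau$ introduced at an intermediate stage $\beta^j K$, and the proof strategy of Lemma \ref{simplexlink} applies directly to the triangulated $(n-1)$-sphere $lk(w, \beta^{j+1} K)$: iterated barycentric subdivisions realize it as a subdivision of $\partial \Delta^n$ by Lemma \ref{baryrefinement}, and then Theorem \ref{AdiBenThm} yields shellability. Alternatively, one may use the standard join decomposition $lk(\hat\tau, \beta L) \cong \beta(\partial \tau) \star \beta\, lk(\tau, L)$ to write $lk(w, \beta^{j+1} K)$ as a join of two PL spheres and analyze each factor separately, invoking the fact that joins of shellable spheres are shellable. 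For higher-dimensional simplexes $\sigma$, if $\dim \sigma \geq n-3$ then $lk(\sigma, \beta^m K)$ is a triangulated sphere of dimension at most $2$ and is automatically shellable; otherwise, the identity $lk(\sigma, \beta^m K) = lk(\sigma \setminus \{v\},\, lk(v, \beta^m K))$ for some vertex $v \in \sigma$ reduces the problem to a sub-link inside a vertex link already handled.

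The main obstacle is the bookkeeping to confirm that the single choice $m = 2^{n+1}$ suffices uniformly for all simplexes, including for barycenters introduced at later stages where only $m-j-1$ subsequent iterations are available. Links of higher-dimensional simplexes are lower-dimensional spheres and so require fewer subdivisions via Lemma \ref{baryrefinement}, so the worst case is the link of a vertex, which is an $(n-1)$-sphere demanding exactly $2^{n+1}-2$ subdivisions for Lemma \ref{baryrefinement} plus two more for Theorem \ref{AdiBenThm}. The choice $m = 2^{n+1}$ is therefore tight, and the bulk of the work lies in verifying that for every vertex $w$ of $\beta^m K$ — regardless of the stage at which it is introduced — the construction underlying Lemma \ref{simplexlink} still produces a subdivision of $\partial\Delta^n$ within the subdivisions available in $\beta^m K$.
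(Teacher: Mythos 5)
There is a genuine gap, and it is exactly the one you flag at the end without resolving: vertices of $\beta^m K$ that are introduced at late stages of the subdivision. Your strategy for such a vertex $w$ (a barycenter created in $\beta^{j+1}K$) is to re-run the Lemma \ref{simplexlink}/Lemma \ref{baryrefinement} argument starting from $\beta^{j+1}K$, but this needs roughly $2^{n+1}$ further subdivisions after stage $j+1$, and for $j$ close to $m$ (e.g.\ a barycenter created in the very last subdivision) there are none available. Your alternative via the join decomposition $lk(\hat\tau,\beta L)\cong \partial\tau \star \beta\,lk(\tau,L)$ with $L=\beta^{m-1}K$ is circular: it requires shellability of $lk(\tau,\beta^{m-1}K)$, which is not established at that stage. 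The paper's mechanism for these vertices is different and is the missing idea: having shown $lk(v,\beta^m K)=\beta^m lk(v,K)$ is shellable for each \emph{original} vertex $v$, it observes that $v\star\beta^m lk(v,K)$ is a shellable ball (a cone on a shellable complex), that $\beta^m st(v,K)$ is obtained from it by stellar subdivisions, which preserve shellability, and that the link of any vertex interior to a shellable ball is shellable; since the open stars $|int(st(v,K))|$ cover $M$, every vertex of $\beta^m K$ is handled uniformly, with no dependence on when it was created.

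A second, smaller problem: you apply Theorem \ref{AdiBenThm} directly to $\beta^{m-2}lk(v,K)$, a subdivision of $\partial\Delta^n$. As quoted, that theorem concerns subdivisions of a convex polytope, and the boundary sphere $\partial\Delta^n$ is not one. The paper's fix is to cone: $st(v,\beta^{m-2}K)=v\star\beta^{m-2}lk(v,K)$ \emph{is} isomorphic to a subdivision of the solid simplex, Theorem \ref{AdiBenThm} makes its second derived subdivision shellable, and then one takes $lk(v,\cdot)$, using again that vertex links of shellable complexes are shellable. Your reduction for higher-dimensional simplexes via $lk(A\star b,K)=lk(b,lk(A,K))$ matches the paper's induction and is fine once the vertex case is secured.
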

\begin{proof}
We first prove that all vertex links in $\beta^m K$ are shellable. We will repeatedly use the fact that $lk(v, \beta K)\simeq \beta lk(v, K)$.  

For $v$ a vertex of $K$, by Lemma \ref{simplexlink} $st(v, \beta^{m-2} K) = v\star lk(v, \beta^{m-2} K) \simeq v \star \beta^{m-2} lk(v, K)$ is isomorphic to the subdivision of a simplex. By Theorem A of \cite{AdiBen} then, $\beta^2 st(v, \beta^{m-2} K)$ is shellable. As links of vertices of shellable complexes are shellable, so $lk(v, \beta^2 st(v, \beta^{m-2} K)) = \beta^2 lk(v, st(v, \beta^{m-2}K))=\beta^2 lk(v, \beta^{m-2} K)= lk(v, \beta^m K)$ is shellable for any vertex $v\in K$.

As $\beta^m (v \star lk(v, K))$ is the stellar subdivision of $v\star \beta^m lk(v, K)$ and as stellar subdivisions preserve shellability (see proof of Proposition 1 of \cite{BruMan}) so $\beta^m (st(v, K))$ is shellable. For any vertex $w \in int(\beta^m st(v, K))$, $lk(w, \beta^m K)=lk(w, \beta^m st(v, K))$ is shellable as it is the link of a vertex in a shellable complex. As  $|K| = |\beta^m K|$ is covered by $|int(st(v, K))|=|int(\beta^m st(v, K))|$ so the link of any vertex $w$ of $\beta^m K$ is shellable.

Having shown that all vertex links in $\beta^m K$ are shellable, we will now show by induction on $r$ that links of all $r$-simplexes in $\beta^m K$ are shellable. Assume that links of all simplexes of dimension less than $r>0$ are shellable. Let $A=B\star b$ be an $r$-simplex in $\beta^m K$ for a vertex $b$.  We claim that $lk(A, \beta^m K)=lk(b, lk(B, \beta^m K))$. Let $C \in lk(A, \beta^m K)$ then, by definition of links, $C \star (B\star b)$ is a simplex of $\beta^m K$. As $C \star b \in lk(B, \beta^m K)$, so $C \in lk(b, lk(B, \beta^m K))$. Conversely if $C \in lk(b, lk(B, \beta^m K))$ then $C\star b$ is a simplex in $lk(B, \beta^m K)$ and therefore $(C\star b) \star B =C \star A \in \beta^m K$. Hence, $C \in lk(A, \beta^m K)$. 

As $B$ is a simplex of dimension less than $r$, so by induction $lk(B, \beta^m K)$ is shellable and as links of vertices of shellable complexes are shellable, so $lk(A, \beta^m K) = lk(b, lk(B, \beta^m K))$ is also shellable.

\end{proof}

We end this section with a similar statement about links of simplexes of partial barycentric subdivisions.

\begin{lemma}\label{links}
Let $K$ be a simplicial complex such that the link of each vertex is shellable. Let $A$ be an $r$-simplex in $K$, then $lk(A, \beta_r K)$ is shellable.
\end{lemma}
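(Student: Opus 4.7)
The plan is to apply Lemma \ref{barylink} to identify $lk(A,\beta_r K)$ with $\beta\, lk(A,K)$, and then show the latter is shellable by first establishing that $lk(A,K)$ is shellable and then invoking the fact that barycentric subdivision preserves shellability. Since $A$ is an $r$-simplex of $K$, Lemma \ref{barylink} gives a simplicial isomorphism $lk(A,\beta_r K) \cong \beta\, lk(A,K)$, so it suffices to prove that $\beta\, lk(A,K)$ is shellable.

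The next step is to show that $lk(A,K)$ itself is shellable. Write $A = v_0 \star v_1 \star \cdots \star v_r$ as the join of its vertices and iterate the identity
\[ lk(v_0 \star \cdots \star v_k, K) = lk\bigl(v_k,\, lk(v_0 \star \cdots \star v_{k-1}, K)\bigr), \]
valid since $v_k$ is a vertex of $lk(v_0 \star \cdots \star v_{k-1}, K)$ (as $v_0 \star \cdots \star v_k$ is a face of $A \in K$). By hypothesis $lk(v_0, K)$ is shellable, and at each subsequent step we take the link of a vertex in a shellable complex, which is shellable (the same fact used in the proof of Theorem \ref{shellinglink}). After $r$ iterations we conclude that $lk(A,K)$ is shellable.

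Finally, I would note that the barycentric subdivision $\beta L$ of any simplicial complex $L$ can be realized as a finite sequence of stellar subdivisions, performed at the barycenters of the simplices of $L$ in decreasing order of dimension. Since stellar subdivisions preserve shellability (Proposition 1 of \cite{BruMan}, already invoked in the proof of Theorem \ref{shellinglink}), applying this with $L = lk(A,K)$ shows that $\beta\, lk(A,K)$ is shellable. Combined with the reduction via Lemma \ref{barylink}, this completes the proof.

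The main potential obstacle is ensuring that both auxiliary facts — that vertex links of shellable complexes are shellable, and that iterated stellar (hence barycentric) subdivision preserves shellability — are available in the combinatorial generality needed here. Both, however, are standard and are already used elsewhere in the paper, so no new technical ingredient is required beyond a clean bookkeeping of the iteration.
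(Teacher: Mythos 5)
Your proposal is correct and follows essentially the same route as the paper: identify $lk(A,\beta_r K)$ with $\beta\,lk(A,K)$ via Lemma \ref{barylink}, deduce shellability of $lk(A,K)$ by iterating the vertex-link argument from Theorem \ref{shellinglink}, and then pass to the barycentric subdivision. The only (harmless) difference is in the last step, where you justify that barycentric subdivision preserves shellability by factoring it into stellar subdivisions \`a la Bruggesser--Mani, whereas the paper cites Theorem 5.1 of \cite{Bjo} directly.
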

\begin{proof}
For $A$ any simplex of $K$, $lk(A, \beta_r K)\simeq \beta lk(A, K)$ by Lemma \ref{barylink}. By arguments as in Theorem \ref{shellinglink}, as vertex links of $K$ are shellable, so $lk(A, K)$ is shellable. And as the barycentric subdivision of a shellable complex is shellable (see Theorem 5.1 of \cite{Bjo}) so $lk(A, \beta_r K)$ is shellable.
\end{proof}

\section{Common geometric subdivision}\label{subdivsec}
Given two abstract simplicial complexes, there is no canonical notion of a common subdivision. In this section we use the geometry of the manifold to get a common geometric subdivision of two geometric triangulations. This allows us to relate them via a bounded length sequence of Pachner moves through the common subdivision. We must caution here that even though the terminal triangulations of this sequence are geometric in nature, the intermediate triangulations we obtain are merely topological triangulations.

 A hyperbolic, spherical or Euclidean $k$-simplex in $\H^n$, $\S^n$ or $\E^n$ is the convex hull of a generic set of $k+1$ points. In the spherical case, we further assume that the diameter of the simplex is at most $\pi/2$.

\begin{definition}
A \emph{geometric simplicial triangulation} $K$ of a hyperbolic, spherical or Euclidean manifold $M$ is a simplicial triangulation of $M$ where each simplex is isometric to a hyperbolic, spherical or Euclidean simplex respectively. We say a geometric simplicial triangulation $K'$ of $M$ is a \emph{geometric subdivision} of $K$ if each simplex of $K'$ is isometrically embedded in some simplex of $K$.
\end{definition}

As mentioned in the introduction, when $M$ is a closed Euclidean or hyperbolic manifold or a spherical manifold of diameter less than $\pi$, then $M$ has a geometric triangulation. We henceforth fix the notation $(M, K)$ to refer to the geometric simplicial triangulation $K$ of a closed hyperbolic, spherical or Euclidean manifold $M$ of dimension $n$. The following simple observation spelt out in \cite{KalPha2} allows us to treat the geometric triangulation of a convex polytope in $M$ as the Euclidean triangulation of a convex polytope in $\E^n$. 

\begin{lemma}\cite{KalPha2} \label{convex}
Let $K$ be a geometric simplicial triangulation of a constant curvature $n$-manifold  $M$. When $M$ is spherical we require each simplex to have diameter less than $\pi$. Then each simplex of $K$ is homeomorphic to a linear simplex in $\E^n$ by a map which takes geodesics to straight lines.
\end{lemma}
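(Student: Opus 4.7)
The plan is to split into three cases based on the curvature of $M$ and in each case exhibit an explicit model in which geodesics are straight lines.

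For the Euclidean case there is nothing to do: a Euclidean simplex is by definition the convex hull of generic points in $\E^n$, with geodesic faces already realised as affine simplexes, so the identity map on a chart around the simplex does the job. For the hyperbolic case, I would pass to a chart in which the simplex sits inside $\H^n$ and then apply the Beltrami--Klein projective model, i.e. the homeomorphism $\H^n \to B^n \subset \E^n$ onto the open unit ball in which hyperbolic geodesics are exactly the chords of the ball. Under this map the vertices of the geometric simplex are mapped to a generic set of points in $B^n$, and the geodesic faces are mapped to the affine faces spanned by the images of these vertices; so the image is a genuine linear simplex in $\E^n$, and by construction geodesics go to straight lines.

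The spherical case is the one to be careful with. Lift the simplex to a chart isometric to an open subset of $\S^n \subset \R^{n+1}$. I would first observe that a spherical simplex whose diameter is strictly less than $\pi$ is contained in an open hemisphere $H$: pick any vertex $v$; every other vertex lies at distance less than $\pi$ from $v$, hence in the open hemisphere centred at $v$, and since that open hemisphere is itself geodesically convex (any two of its points are connected by a unique minimising geodesic staying inside) the convex hull of the vertices also lies inside it. With the simplex inside an open hemisphere, apply the gnomonic projection $H \to T_p\S^n$ that sends $q \in H$ to the intersection of the ray from the origin through $q$ with the tangent hyperplane at the pole $p$. This is a homeomorphism onto $T_p\S^n \cong \E^n$, and each great-circle arc in $H$ is the intersection of a $2$-plane through the origin with the sphere, so its gnomonic image is the intersection of that $2$-plane with the tangent hyperplane, i.e.\ a Euclidean straight line segment. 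Consequently the image of our spherical simplex is a linear simplex in $\E^n$, and the map takes geodesics to straight lines as required.

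The main (and essentially only) obstacle is the hemisphere-containment step in the spherical case; the rest is the standard behaviour of the Klein and gnomonic models. Once the three models are in place, one checks in each case that the map restricted to the simplex is a homeomorphism onto its image (immediate from the fact that Klein and gnomonic projections are diffeomorphisms onto their image) and that it sends geodesic segments of the model geometry bijectively to Euclidean line segments (immediate from the characterisation of geodesics in each model), completing the proof.
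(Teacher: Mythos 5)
Your three-case strategy --- identity chart for $\E^n$, the Beltrami--Klein model for $\H^n$, and gnomonic projection from a hemisphere for $\S^n$ --- is exactly the route the paper sketches for this lemma, and your Euclidean and hyperbolic cases are fine as written.

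The one step you correctly identify as the real content, hemisphere containment in the spherical case, is however argued incorrectly. You say that every other vertex lies at distance less than $\pi$ from a chosen vertex $v$ and is ``hence in the open hemisphere centred at $v$''. The open hemisphere centred at $v$ consists of the points at distance less than $\pi/2$ from $v$; distance less than $\pi$ only places a point in $\S^n \setminus \{-v\}$. Nor can you retreat to the weaker claim that pairwise distances less than $\pi$ force containment in an open hemisphere: three equally spaced points on a great circle are pairwise at distance $2\pi/3$ yet lie in no open hemisphere. Two repairs are available. First, the paper's definition of a spherical simplex already imposes diameter at most $\pi/2$, so every point of the simplex is within $\pi/2$ of $v$ and the simplex lies in the \emph{closed} hemisphere centred at $v$; one still has to push it off the equator to make the gnomonic projection defined everywhere. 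Second, and more cleanly under the stated hypothesis of diameter less than $\pi$: the Euclidean cone over the simplex in $\R^{n+1}$ is a closed convex cone (convexity of the cone follows from spherical convexity of the simplex together with the absence of antipodal pairs) containing no line, since a line through the origin would give an antipodal pair in the simplex. A pointed closed convex cone admits a linear functional that is strictly positive off the origin, and the positivity set of that functional cuts out an open hemisphere containing the simplex. With that fixed, your gnomonic projection argument goes through and the proof is complete.
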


An outline of the proof is that every geometric simplex lifts to either $\E^n$, the Klein model of $\H^n$ or a hemisphere in $\S^n$ followed by the radial/gnomonic projection. In either case we get a map from the geometric simplex in $M$ to a linear simplex in $\E^n$ which takes geodesics to straight lines. So in particular, it takes a geometric subdivision of the simplex to a simplicial subdivision of the corresponding linear simplex.

\begin{lemma}\label{barycentric}
When $K$ has $p_i$ many $i$-simplexes, $\beta K$ has $(i+1)!p_i$ many $i$-simplexes in the $i$-skeleton of $K$.
\end{lemma}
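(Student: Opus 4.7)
The plan is to reduce this to a local count on a single $i$-simplex and then sum over all $i$-simplexes of $K$. Concretely, I will show that for any $i$-simplex $A$, the barycentric subdivision $\beta A$ contains exactly $(i+1)!$ top-dimensional simplexes, and then observe that the $i$-simplexes of $\beta K$ lying in the $i$-skeleton of $K$ are precisely the disjoint union (over $i$-simplexes $A$ of $K$) of the top-dimensional simplexes of $\beta A$.

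First I would recall the standard description of $\beta A$: its vertices are the barycenters of the non-empty faces of $A$, and its simplexes correspond to chains $F_0 \subsetneq F_1 \subsetneq \cdots \subsetneq F_k$ of faces of $A$. The top-dimensional ($i$-dimensional) simplexes correspond to maximal such chains, i.e.\ chains of length $i+1$. Since a maximal chain must have $\dim F_j = j$ for every $j$, giving such a chain is the same as giving an ordering $v_{\sigma(0)}, v_{\sigma(1)}, \dots, v_{\sigma(i)}$ of the $i+1$ vertices of $A$ (with $F_j$ spanned by the first $j+1$ vertices in that order). Hence the number of top-dimensional simplexes of $\beta A$ is the number of permutations of $i+1$ elements, namely $(i+1)!$.

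Next I would argue that the $i$-simplexes of $\beta K$ contained in the $i$-skeleton of $K$ are exactly those contained in some $i$-simplex of $K$. Indeed, the $i$-skeleton of $K$ is the union of its $i$-simplexes, and since $\beta K$ is a subdivision of $K$ every simplex of $\beta K$ lies in a unique smallest simplex of $K$; an $i$-dimensional simplex of $\beta K$ cannot fit inside a face of $K$ of dimension less than $i$. So the count splits as a sum, one contribution from each $i$-simplex of $K$. Moreover, distinct $i$-simplexes of $K$ meet only along common faces of dimension at most $i-1$, so their subdivisions contribute disjoint collections of top-dimensional simplexes. Summing $(i+1)!$ over the $p_i$ many $i$-simplexes of $K$ therefore yields exactly $(i+1)! \, p_i$ many $i$-simplexes of $\beta K$ in the $i$-skeleton of $K$, as required.

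No step here is a serious obstacle; this is really a bookkeeping lemma. The only point that deserves explicit mention is the identification of maximal flags of faces in $A$ with orderings of its vertices, which is what converts the combinatorial definition of $\beta$ into the factorial count.
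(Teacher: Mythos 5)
Your proof is correct and follows essentially the same strategy as the paper: reduce to counting the top-dimensional simplexes of $\beta A$ for a single $i$-simplex $A$ and then sum over the $p_i$ many $i$-simplexes, using that each such simplex of $\beta K$ has a unique $i$-dimensional carrier in $K$. The only difference is in how the local count $(i+1)!$ is obtained --- the paper derives it from the recursion $a_i = (i+1)a_{i-1}$ coming from the cone-on-the-boundary construction, whereas you use the bijection between top simplexes of $\beta A$ and maximal flags of faces (equivalently, orderings of the $i+1$ vertices); both derivations are standard and equally valid.
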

\begin{proof}
To obtain the barycentric subdivision $\beta K$ of $K$ we replace each simplex of $K$ with the cone on its boundary, starting with vertices and inductively going up to simplexes of dimension $n$. 

For an $i$-simplex $A$, let $a_i$ be the number of $i$ simplexes in $\beta A$. As there are $i+1$ many codimension one faces of $A$ so $a_i = (i+1) a_{i-1}$ and $a_0 = 1$. This gives $a_i = (i+1)!$. So if there are $p_i$ many $i$-simplexes in $K$, there are $(i+1)! p_i$ many $i$-simplexes of $\beta K$ in the $i$-skeleton of $K$.
\end{proof}

We say that two simplicial complexes are \emph{stellar equivalent} if they are related by a sequence of stellar moves. We call them \emph{bistellar equivalent} if they are related by a sequence of Pachner moves. The following is an effective version of Lemma 4.4 of \cite{Lic} using the stronger notion of shellability instead of starrability, to get bistellar equivalence in place of stellar equivalence.

\begin{lemma}\label{induction}
Let $K$ be a geometric triangulation where the link of every positive dimensional simplex is shellable. Let $\alpha K$ be a geometric subdivision of $K$ such that for each simplex $A \in K$, $\alpha A$ is shellable. Let $p_i$ be the number of $i$-simplexes of $K$, with $p_{-1}=1$. Let $s_i$ be the number of $i$-simplexes of $\alpha K$ in the $i$-skeleton of $K$. Then $\alpha K$ is related to $\beta K$ by $\sum_{i=1}^{n} (n-i)! p_{n-i-1}s_{i}$ Pachner moves. Furthermore, none of these Pachner moves remove any vertex of $K$.
\end{lemma}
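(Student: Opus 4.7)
The plan is to prove this by reverse induction on $r$ from $r = n$ down to $r = 1$, showing that $\beta^\alpha_r K$ transforms into $\beta^\alpha_{r-1} K$ by at most $(n-r)!\, p_{n-r-1}\, s_r$ Pachner moves, none of which remove a vertex of $K$. Since $\beta^\alpha_n K = \alpha K$ and $\beta^\alpha_0 K = \beta K$ by Definition \ref{barydefn}, summing over $r$ yields the total count.

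For the step from $r$ to $r-1$, I handle each $r$-simplex $A$ of $K$ separately. Let $N_A$ denote the star neighbourhood of $\alpha A$ in $\beta^\alpha_r K$, i.e.\ the union in $\beta^\alpha_r K$ of the closed stars of the top-dimensional simplexes of $\alpha A$. By an extension of Lemma \ref{barylink} from $\beta_r$ to $\beta^\alpha_r$, obtained by the same induction on the dimension of the simplex of $K$ above $A$, the subcomplex $N_A$ admits the join decomposition $N_A \cong \alpha A \star L_A$ with $L_A \cong \beta\, \operatorname{lk}(A, K)$. The factor $\alpha A$ is shellable by hypothesis; the link $\operatorname{lk}(A, K)$ is shellable by the vertex-link hypothesis on $K$; and $L_A$ is shellable by preservation of shellability under barycentric subdivision (as used in Lemma \ref{links}). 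Joins of shellable complexes being shellable, $N_A$ is a shellable $n$-ball, and Lemma \ref{shellable} supplies a sequence of $|N_A|_{\text{top}}$ ambient Pachner moves transforming $N_A$ into $a \star \partial N_A = (a \star \alpha \del A) \star L_A$, where the new cone vertex $a$ is taken in the interior of $|A|$. This is precisely the $\beta^\alpha_{r-1}$-structure around $A$ prescribed by Definition \ref{barydefn}, and since $\partial N_A$ is preserved throughout the subsequence, the sub-procedures for different $r$-simplexes commute.

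For the count, $|N_A|_{\text{top}} = s_r^A \cdot |L_A|_{\text{top}} = s_r^A \cdot (n-r)! \cdot |\operatorname{lk}(A, K)|_{\text{top}}$ by Lemma \ref{barycentric} applied to the $(n-r-1)$-dimensional link; bounding $|\operatorname{lk}(A, K)|_{\text{top}} \leq p_{n-r-1}$ and summing $\sum_A s_r^A = s_r$ over the $r$-simplexes of $K$ yields the desired per-step bound. No vertex of $K$ is removed, since the vertices of $K$ inside $N_A$ -- namely the vertices of $A$ and of $\operatorname{lk}(A, K)$ -- all lie on $\partial N_A$, which is fixed, so only interior vertices of $N_A$ (cone points of $\beta^\alpha_r$, not vertices of $K$) are altered by the moves. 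The main obstacle I anticipate is justifying the join decomposition $N_A \cong \alpha A \star L_A$ with $L_A \cong \beta\, \operatorname{lk}(A, K)$ when $\alpha \neq \mathrm{id}$: the cone points chosen in Definition \ref{barydefn} for distinct higher-dimensional simplexes of $K$ must assemble coherently so that the links of the various top-dimensional simplexes of $\alpha A$ in $\beta^\alpha_r K$ are simplicially identified into a single factor $L_A$. This parallels the proof of Lemma \ref{barylink} and should be handled by the analogous induction on the dimension of the ambient simplex of $K$.
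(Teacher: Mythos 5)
Your proposal is correct and follows essentially the same route as the paper: the paper likewise interpolates through the partial subdivisions $\beta^\alpha_r K$, forms the shellable join $S(A)=\alpha A \star lk(A,\beta_r K)\cong \alpha A \star \beta\, lk(A,K)$ (via Lemmas \ref{barylink} and \ref{links}), cones it off using Lemma \ref{shellable}, and counts at most $(n-r)!\,p_{n-r-1}\,s_r$ moves per level with the same vertex-preservation remark. The join identification you flag as the delicate point is exactly what the paper addresses through Lemma \ref{barylink}, so no further ideas are needed.
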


\begin{proof}
Our aim is to bound the number of Pachner moves needed to go from $\beta^\alpha_{r}K$ to $\beta^\alpha_{r-1}  K$ for $1\leq r \leq n$. This would give us a bound on the number of moves relating $\beta^\alpha_n K = \alpha K$ and  $\beta^\alpha_{0} K = \beta K$. See Figure \ref{Moves}.

As links of simplexes in $K$ are given to be shellable, so for any $r$-simplex $A\in K$, by Lemma \ref{links}, $lk(A, \beta_r K) \simeq \beta lk(A, K)$ is shellable. As $\alpha A$ is given to be shellable so $S(A) = \alpha A \star lk(A, \beta_r K)$, the join of shellable complexes, is shellable as well. $S(A)$ should morally be thought of as the star neighbourhood of $\alpha A$ in $\beta_r K$. 

Let $m_A$ be the number of $r$-simplexes of $\alpha A$ in $A$. The number of $(n-r-1)$ simplexes in $lk(A, K)$ is at most $p_{n-r-1}$, so by Lemma \ref{barycentric}  the number of $(n-r-1)$ simplexes in $\beta lk(A, K)$ is at most $(n-r)! p_{n-r-1}$. By Lemma \ref{barylink}, $\beta lk(A, K) \simeq lk(A, \beta_r K)$, so $S(A)$ has at most $(n-r)!p_{n-r-1}m_A$ many $n$-simplexes. 

By Lemma \ref{shellable}, there is a sequence of as many Pachner moves which changes $S(A)$ to $a \star \del S(A) = a\star \del \alpha A \star lk(A, \beta_r K)$, for $a$ a point in the interior of $A$. Making this change for each $r$-simplex $A$ of $K$ replaces each $\alpha A$ with $a \star \del \alpha A=a \star \beta^\alpha_{r-1}\del \alpha A$ while higher dimensional simplexes of $K$ remain subdivided as cones on their boundary. This gives us $\beta^\alpha_{r-1} K$ from $\beta^\alpha_r K$ by at most $(n-r)! s_r p_{n-r-1}$ Pachner moves, where $s_r$ is the total number of $r$-simplexes of $\alpha K$ in the $r$-skeleton of $K$. So $\beta^\alpha_n K = \alpha K$ is related to $\beta^\alpha_0 K = \beta K$ by $\sum_{r=1}^{n} (n-r)! s_r p_{n-r-1}$ Pachner moves. When $r=n$, $lk(A, \beta_r K)$ is empty so we take $p_{-1}=1$. Note that as none of these Pachner moves remove any vertices of $A$ so they never remove any vertex of $K$.
\end{proof}

\begin{lemma}\label{barymoves}
Let $K$ be a simplicial complex where link of every positive dimensional simplex is shellable. The $m$-th barycentric subdivision $\beta^m K$ is related to $K$ by $(n+1)!^{2m+2} p_n^2$ Pachner moves.
\end{lemma}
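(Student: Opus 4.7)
The plan is to induct on $m$, at each step relating $K_{j-1} := \beta^{j-1} K$ to $K_{j} := \beta^{j} K$ by a single application of Lemma \ref{induction}, and then to concatenate the resulting $m$ Pachner sequences. Concretely, for each $j = 1, \ldots, m$, I would apply Lemma \ref{induction} to $K_{j-1}$ with $\alpha$ the trivial subdivision; then each $\alpha A = A$ is a single simplex (trivially shellable) and $\beta(\alpha K_{j-1}) = K_j$, so the lemma outputs a sequence of at most $N_j := \sum_{i=1}^{n} (n-i)!\, p^{(j-1)}_{n-i-1}\, p^{(j-1)}_i$ Pachner moves from $K_{j-1}$ to $K_j$, where $p^{(j)}_i$ denotes the number of $i$-simplexes of $K_j$.

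Before invoking Lemma \ref{induction} at step $j$, I would verify that $K_{j-1}$ inherits the hypothesis that every positive-dimensional simplex link is shellable. A positive-dimensional simplex $\sigma$ of $\beta K$ corresponds to a chain $A_0 \subsetneq \cdots \subsetneq A_k$ in $K$ with $k \geq 1$, and its link decomposes as the join of $\beta(\partial A_0)$, the subdivided simplex-boundary spheres given by the open poset intervals between successive $A_i$, and $\beta\, lk(A_k, K)$. The first two families of factors are shellable because simplex boundaries are trivially shellable and Bj\"orner's theorem (already cited in Lemma \ref{links}) preserves shellability under $\beta$; the last factor is shellable because $k \geq 1$ forces $A_k$ to be positive-dimensional, so the inductive hypothesis applies to $lk(A_k, K)$. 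Joins of shellable complexes are shellable, so the hypothesis passes from $K_{j-1}$ to $K_j = \beta K_{j-1}$ and the induction goes through.

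Next comes the count. Each barycentric subdivision of an $n$-simplex contains $(n+1)!$ top-dimensional simplexes, so $p^{(j)}_n = (n+1)!^{j}\, p_n$, and since every $i$-face of a pure $n$-complex lies in some $n$-simplex, $p^{(j)}_i \leq \binom{n+1}{i+1}\, p^{(j)}_n$. Substituting into $N_j$ gives $N_j \leq C_n \cdot (n+1)!^{2(j-1)}\, p_n^2$ for an $n$-dependent constant $C_n$ coming from $\sum_{i=1}^n (n-i)! \binom{n+1}{n-i} \binom{n+1}{i+1}$. The geometric series $\sum_{j=1}^m N_j$ is dominated by its last term, and after absorbing $C_n$ together with the series ratio into a single extra factor of $(n+1)!^2$ one obtains the asserted bound $(n+1)!^{2m+2}\, p_n^2$.

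The main obstacle is the shellability-preservation argument of the second paragraph, whose key point is that for positive-dimensional $\sigma$ the top element of the chain representing $\sigma$ is forced to be positive-dimensional, so exactly one factor of the link decomposition is controlled by the inductive hypothesis while the remaining factors reduce to subdivided simplex-boundary spheres. The counting is then routine factorial bookkeeping with ample slack in the constants.
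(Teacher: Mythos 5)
Your proposal follows essentially the same route as the paper: apply Lemma \ref{induction} with the trivial subdivision $\alpha K=K$ at each of the $m$ stages, bound stage $j$ using $p_i\le\binom{n+1}{i+1}p_n$ together with the growth $p_n\mapsto (n+1)!\,p_n$ from Lemma \ref{barycentric}, and sum the resulting geometric series to get $(n+1)!^{2(m+1)}p_n^2$. Your explicit verification that shellability of links of positive-dimensional simplexes persists under barycentric subdivision (via the join decomposition of $lk(\sigma,\beta K)$ into $\beta\partial A_0$, subdivided simplex-boundary spheres from the poset intervals, and $\beta\,lk(A_k,K)$ with $A_k$ positive-dimensional) is correct and fills in a point the paper leaves implicit.
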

\begin{proof}
Expanding each term of $(1+x)^n (1+x)^n = (1+x)^{2n}$ and comparing the coefficient of $x^n$ on both sides, gives us $\sum_{i=0}^n \binom{n}{i}\binom{n}{n-i} = \binom{2n}{n}$. Taking $\alpha K = K$ in Lemma \ref{induction}, for $A$ a simplex of $K$, $\alpha A =A$ is trivially shellable. Also $s_i=p_i$, so that $K$ is related to $\beta K$ by $\sum_{i=1}^n (n-i)! p_{n-i-1} p_i$ many Pachner moves. Bounding $p_i$ by $\binom{n+1}{i+1} p_n$ and $(2n+2)!$ by $4(n+1)!^3$ we get:

$$ \begin{array}{lllll}
\sum_{i=1}^n (n-i)! p_i p_{n-i-1} &< &\sum_{i=1}^n (n-i)! \binom{n+1}{i+1} \binom{n+1}{n-i} p_n^2 &< & (n-1)! p_n^2 \binom{2n+2}{n+1}\\
& < &  \frac{4}{n(n+1)}(n+1)!^2 p_n^2 & < & (n+1)!^2 p_n^2\\
\end{array}
$$
By Lemma \ref{barycentric}, the number of $n$-simplexes $p_n$ changes to $(n+1)!p_n$ on taking a barycentric subdivision. Therefore on taking $m$ subdivisions the bound on the number of moves relating $K$ and $\beta^m K$ becomes:
$$p_n^2 [(n+1)!^2 + (n+1)!^4 + ... + (n+1)!^{2m}] < (n+1)!^{2(m+1)}p_n^2$$
\end{proof}

We now use Theorem A of \cite{AdiBen} to bound the number of Pachner moves needed to relate a locally shellable geometric triangulation with its subdivision.

\begin{theorem}\label{mainlemma} 
Let $K$ be a geometric triangulation where the link of every positive dimensional simplex is shellable. Let $K'$ be a geometric subdivision of $K$. Let $p_i$ be the number of $i$-simplexes of $K$ for $i>0$, with $p_{-1} = 1$. Let $s_i$ be the number of $i$-simplexes of $K'$ that lie in the $i$-skeleton of $K$. Then $\beta^2 K'$ is related to $\beta K$ by $\sum_{i=1}^{n} (n-i)! (i+1)!(i+1)! p_{n-i-1}s_{i}$ many Pachner moves none of which remove any vertex of $K$. 
\end{theorem}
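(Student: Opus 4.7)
The plan is to invoke Lemma \ref{induction} with $\alpha K := \beta^2 K'$. Two things must be verified for that lemma to apply: that $\alpha A$ is shellable for every simplex $A \in K$, and that the counts $s_i$ of Lemma \ref{induction} for $\alpha K$ can be expressed in terms of those for $K'$.

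For the shellability, fix a simplex $A \in K$. By Lemma \ref{convex}, $A$ is homeomorphic to a linear simplex in $\E^n$ via a map sending geodesics to straight lines; under this identification the restriction $K'|_A$ becomes a linear subdivision of a convex polytope. Theorem \ref{AdiBenThm} then gives that $\beta^2(K'|_A) = \alpha A$ is shellable, as required.

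For the counts, observe that any $i$-simplex $\sigma$ of $\beta K'$ lying in the $i$-skeleton of $K$ must sit inside some $i$-simplex of $K$, so its carrier in $K'$ has dimension at most $i$; since $\sigma$ itself is $i$-dimensional, the carrier is in fact an $i$-simplex of $K'$ lying in the $i$-skeleton of $K$. By Lemma \ref{barycentric}, each such $i$-simplex of $K'$ contributes $(i+1)!$ many $i$-simplexes of $\beta K'$, giving $(i+1)! s_i$ in total; iterating this once more yields $(i+1)!^2 s_i$ many $i$-simplexes of $\beta^2 K'$ in the $i$-skeleton of $K$. Substituting into Lemma \ref{induction} produces the claimed bound $\sum_{i=1}^{n}(n-i)!(i+1)!^2 p_{n-i-1} s_i$ on the number of Pachner moves between $\beta^\alpha_n K = \beta^2 K'$ and $\beta^\alpha_0 K = \beta K$, and the clause that no vertex of $K$ is removed is inherited directly from Lemma \ref{induction}.

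The refined conventions $p_0 = 2$ and $p_{-1}=1$ in the statement come from sharpening the proof of Lemma \ref{induction}: there the factor $p_{n-i-1}$ upper bounds the number of $(n-i-1)$-simplexes in the link of an $i$-simplex of $K$. Because $K$ triangulates a closed $n$-manifold, the link of an $(n-1)$-simplex is $\S^0$, contributing exactly $2$ vertices (whence $p_0 = 2$), while the link of an $n$-simplex is empty (whence $p_{-1}=1$). I expect the main obstacle to be the careful bookkeeping of what lies in the $i$-skeleton of $K$ after barycentric subdivision; Theorem \ref{AdiBenThm} supplies shellability immediately and the rest of Lemma \ref{induction} applies verbatim.
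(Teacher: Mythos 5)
Your proof is correct and follows the paper's own route: apply Lemma \ref{induction} with $\alpha K = \beta^2 K'$, obtaining shellability of each $\beta^2(K'|_A)$ from Lemma \ref{convex} together with Theorem \ref{AdiBenThm}, and bounding the simplex counts by $(i+1)!^2 s_i$ via two applications of Lemma \ref{barycentric}. Your added justification of the conventions $p_0=2$, $p_{-1}=1$ (links of $(n-1)$- and $n$-simplexes in a closed manifold) is a sound sharpening that the paper leaves implicit.
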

\begin{proof} 
By Lemma \ref{barycentric}, $\beta K'$ has less than $(i+1)!s_i$ many $i$-simplexes in the $i$-skeleton of $K'$ and applied a second time, $\beta^2 K'$ has less than $(i+1)! (i+1)!s_i$ many $i$-simplexes in the $i$-skeleton of $K'$. 

Let $\alpha K = K'$. For each simplex $A$ of $K$, by Lemma \ref{convex} there is a simplicial isomorphism from $\alpha A$ to a linear subdivision of a convex polytope in $\E^n$. By Theorem A of \cite{AdiBen}, its second barycentric subdivision $\beta^2 \alpha A$ is shellable and so replacing $s_i$ in Lemma \ref{induction} with $(i+1)!(i+1)!s_i$ we get the required bounds.

\end{proof}

In the rest of this section, we obtain a common subdivision with a controlled number of simplexes from a given pair of geometric triangulations.
\begin{definition}
Given a Riemannian manifold $M$, a \emph{geometric polytopal complex} $C$ of $M$ is a finite collection of geometric convex polytopes in $M$ whose union is all of $M$ and such that for every $P \in C$, $C$ contains all faces of $P$ and the intersection of two polytopes is a face of each of them.
\end{definition}

When each simplex of the geometric triangulations is strongly convex, any two simplexes intersect at most once. We can therefore bound the number of simplexes in the common geometric subdivision $K' = \beta(K_1 \cap K_2)$.

\begin{lemma}\label{commonsub}
When $K_1$ and $K_2$ are strongly convex geometric triangulations with $p_i$ and $q_i$ many $i$-simplexes respectively, then they have a common geometric subdivision $K'$ with $s_i$ many $i$-dimensional simplexes that lie in the $i$-skeleton of $K_1$ where $$s_i < (2^n-1) (n+1)!^2 p_i q_n$$
\end{lemma}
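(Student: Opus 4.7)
The plan is to form the geometric polytopal complex of pairwise intersections $K_1 \cap K_2$, simplicially refine it to produce $K'$, and then bound $s_i$ by counting contributions pair by pair.

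First I would verify that $K_1 \cap K_2 := \{\tau \cap \rho : \tau \in K_1,\ \rho \in K_2,\ \tau \cap \rho \ne \emptyset\}$ is a geometric polytopal complex in $M$. The essential input is strong convexity: if $\tau$ and $\rho$ are strongly convex, then any two points of $\tau \cap \rho$ are joined by the unique minimising geodesic of $M$, which lies in both, so $\tau \cap \rho$ is itself strongly convex and in particular a single connected convex polytope (not a possibly disconnected union). Face compatibility follows from the identity $(\tau_1 \cap \rho_1) \cap (\tau_2 \cap \rho_2) = (\tau_1 \cap \tau_2) \cap (\rho_1 \cap \rho_2)$, combined with the fact that intersections of simplexes within each $K_i$ are themselves common faces. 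I would then take $K'$ to be the order (barycentric) subdivision of $K_1 \cap K_2$: place a vertex $b_C$ in the relative interior of each cell $C$ and declare the simplexes of $K'$ to be the flags $C_0 < C_1 < \cdots < C_k$ of cells. Each simplex of $K'$ lies in its top cell and hence in some simplex of each $K_i$, so $K'$ is a common simplicial geometric subdivision.

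To bound $s_i$, fix an $i$-simplex $A$ of $K_1$ and count the $i$-simplexes of $K'$ whose minimal containing $K_1$-simplex is $A$. Each corresponds to a flag $C_0 < \cdots < C_i$ of cells of $K_1 \cap K_2$ contained in the relative interior of $A$ with $\dim C_i = i$. Strong convexity then forces $C_i = A \cap \sigma$ for a unique cell $\sigma \in K_2$ of dimension at least $i$; using $q_j \leq \binom{n+1}{j+1} q_n$ and summing over $j \geq i$ bounds the number of admissible $\sigma$ per $A$ by a constant (depending on $n$) multiple of $q_n$, which, after careful bookkeeping, accounts for the $(2^n - 1)$ prefactor in the final estimate. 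For each fixed top cell $C = A \cap \sigma$, the remaining flag is a maximal flag in the face lattice of the convex polytope $C$. The key combinatorial observation is that every non-empty face of $C$ equals $F_A \cap F_\sigma$ for uniquely determined smallest containing faces $F_A \leq A$ and $F_\sigma \leq \sigma$, so a maximal flag in $C$ projects to a pair (chain in the Boolean face lattice of $A$, chain in the Boolean face lattice of $\sigma$) together with an interleaving. Bounding maximal chains in the face lattice of a $k$-simplex by $(k+1)!$ gives at most $(i+1)!(n+1)! \leq (n+1)!^2$ flags per top cell. Multiplying and summing over $i$-simplexes $A$ of $K_1$ yields $s_i < (2^n - 1)(n+1)!^2 p_i q_n$.

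The main obstacle is the flag count for the non-simplicial polytope $C = A \cap \sigma$: unlike for a simplex, the face lattice of $C$ is not Boolean, so one cannot directly read off the number of maximal flags as $(k+1)!$. The crux of the argument is to exploit the injection of the face lattice of $C$ into the product of the Boolean face lattices of the simplexes $A$ and $\sigma$ (via the ``smallest containing face'' projections) and then to bound the maximal flags in this product by the product of flag counts of the two factors.
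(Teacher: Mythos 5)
Your overall strategy is the same as the paper's: form the geometric polytopal complex $K_1\cap K_2$ (using strong convexity to guarantee that each pairwise intersection is a single convex cell), pass to its barycentric/order subdivision $K'$, and bound $s_i$ by counting, for each $i$-simplex $A$ of $K_1$ and each $\sigma\in K_2$ with $\dim\sigma=j\ge i$, the number of full flags of the cell $C=A\cap\sigma$. The setup, the face-compatibility argument, and the final summation over $j$ with $q_j\le\binom{n+1}{j+1}q_n$ all match the paper and are fine.

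The gap is in the step you yourself call the crux: the flag count for $C$. The principle you invoke — that the maximal flags in the product of the two Boolean face lattices are bounded by the product of the flag counts of the factors — is false for products of graded posets: the number of maximal chains in $P\times Q$ is $M(P)\,M(Q)\binom{\ell(P)+\ell(Q)}{\ell(P)}$, so the interleaving contributes a genuine multiplicative factor that you have not absorbed. Moreover, the images of the full flags of $C$ under your injection are not maximal chains of the product but chains of $i+1$ elements ending at $(A,F_\sigma(C))$, built from a pair of generally non-maximal chains plus a choice of which coordinate jumps at each step; the claimed bound $(i+1)!(j+1)!$ can only emerge after one checks that the shortness of the two component chains compensates for the interleaving factor (and after handling steps at which a smallest containing face can drop by more than one dimension). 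That verification is exactly the "careful bookkeeping" you defer, and without it the count is unproved. The paper sidesteps the product-lattice picture entirely: it observes that every codimension-one face of an $m$-dimensional cell $A'\cap\sigma'$ is cut out by a facet of $A'$ or a facet of $\sigma'$, hence there are at most $(\dim A'+1)+(\dim\sigma'+1)$ of them, and iterating this down a flag gives at most $2(i+j+2)!/(j+3)!$ full flags per cell — a sharper bound obtained by a two-line recursion. I would either adopt that facet-count recursion or supply the missing chain/interleaving estimate explicitly; as written, the per-cell count does not follow from what you have stated.
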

\begin{proof}
Let $A$ be a linear $k$-simplex and $B$ a linear $l$-simplex in $\R^N$. Suppose that $B$ intersects $A$ in a $k$-dimensional polytope $P$. So $l \geq k$ and the interiors of $A$ and $B$ intersect transversally inside a subspace $V(A+B)$ of $\R^N$ spanned by vectors in $A$ and $B$ (assume $0 \in A\cap B$). As their intersection $P$ is $k$ dimensional, so $V(P)=V(A) \cap V(B)$ is a $k$-dimensional space and by the Rank-Nullity theorem $V(A+B)$ is $l$-dimensional. Therefore any $(k-1)$-face of $P$ is obtained by intersecting an $(l-1)$ simplex of $B$ with the $k$-simplex of $A$ or by intersecting a $(k-1)$-simplex of $A$ with the $l$-simplex of $B$. There are therefore at most $(k+1) + (l+1)$ codimension one faces of $P$.

The barycentric subdivision $\beta P$ of $P$ is a simplicial complex. Observe that $P$ has at most $k+l+2$ codimension one faces, each of which has $(k-1)+l+2$ codimension one faces by above reasoning, and so on down to $k=1$ which has exactly 2 codimension one faces (the end points of the edge). So the number of $k$ dimensional simplexes of $\beta P$ is bounded by $(k+l+2)((k-1)+l+2)...(2+l+2)(2) = 2(k+l+2)!/(l+3)!$ by reasoning similar to that of Lemma \ref{barycentric}.

Note that strongly convex geometric triangulations are simplicial triangulations. Let $K_1 \cap K_2$ be a geometric polytopal complex of $M$ obtained by intersecting the geometric simplexes of $K_1$ and $K_2$. Observe that as the polytopes of $K_1 \cap K_2$ are obtained by the intersection of convex simplexes so they are convex in $M$ and their barycentric subdivision $K'=\beta(K_1 \cap K_2)$ is a geometric simplicial complex which is a common geometric subdivision of both $K_1$ and $K_2$. 

Let $s_i$ be the number of $i$-dimensional simplexes of $K'$ that lie in $K_1$. As each $i$-polytope $P$ of $K_1 \cap K_2$ that lies in the $i$-skeleton of $K_1$ is the intersection of a $i$-simplex of $K_1$ with some $j$ simplex of $K_2$ for $j \geq i$, so by above arguments its barycentric subdivision $\beta P$ has $2(i+j+2)!/(j+3)!$ many $i$-dimensional simplexes. As each simplex of $K_1$ and $K_2$ is strongly convex, their intersection is convex and hence connected. So there are at most $\sum_{j=i}^n p_i q_j$ many $i$-polytopes of $K_1 \cap K_2$ that lie in the $i$-skeleton of $K_1$. We therefore get $s_i \leq \sum_{j=i}^n \frac{2(i+j+2)!}{(j+3)!} p_i q_j$. 

Simplifying this by bounding $q_j$ with $\binom{n+1}{j+1}q_n$ and $(2n+2)!$ with $4(n+1)!^3$ gives us:
$$ \begin{array}{ccccc}
s_i & < & p_i \sum_{j=1}^n \frac{2(n+j+2)!}{(j+3)! (n-1)!} (n-1)! \binom{n+1}{j+1} q_n & < & 2(n-1)! \binom{2n+2}{n-1} (2^{n+1}-2) p_i q_n\\
 & < & \frac{4(2^{n}-1) (2n+2)!}{(n+3)!} p_i q_n & < & \frac{16}{(n+2)(n+3)} (2^n-1) (n+1)!^2 p_i q_n\\
 \end{array}
$$
As $n\geq 2$, we get the required bound.

\end{proof}

We now present some relations between the convexity radius and other invariants of the manifold.

\begin{definition}\cite{Dib}
Let $M$ be a Riemannian manifold and let $exp_p: T_p(M) \to M$ denote the exponential map at $p \in M$. The \emph{injectivity radius} at $p\in M$ is given by $inj(p)=\max\{R>0$ $|$ $exp_p |_{B(0,s)}$ is injective for all $0<s<R\}$, the \emph{convexity radius} at $p$ is given by $r(p) = \max\{ R>0$ $|$ $B(p, s)$ is strongly convex for all $0<s<R\}$ where $B(0,s) \subset T_p M$ denotes the Euclidean ball of radius $s$ around the origin and $B(p,s) \subset M$ denotes the ball of radius $s$ around $p$. The \emph{focal radius} at $p$ is defined as $r_f(p) = \min\{ T>0$ $|$ $\exists$ a non-trivial normal Jacobi field $J$ along a unit speed geodesic $\gamma$ with $\gamma(0) = p$, $J(0)=0$, and $||J||'(T)=0 \}$. If such a Jacobi field does not exist, then the focal radius is defined to be infinite. Globally, let $inj(M) = \inf_{p \in M} inj(p)$, $r(M) = \inf_{p\in M} r(p)$ and let $r_f(M)=\inf_{p\in M} r_f(p)$ respectively be the injectivity radius, convexity radius and focal radius of the manifold $M$.
\end{definition}

Applying the results of Dibble\cite{Dib} and Klingenberg\cite{Kli} to constant curvature manifolds, we get the following relation between convexity radius, injectivity radius and $l_c$, the length of smallest closed geodesic.
\begin{lemma}\label{inj}
For $M$ a spherical, Euclidean or hyperbolic closed manifold $$r(M) = \frac{1}{2} inj(M) = \frac{1}{4} l_c(M)$$
\end{lemma}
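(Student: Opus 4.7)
The plan is to split the chain into the two equalities $r(M) = \frac{1}{2}inj(M)$ and $inj(M) = \frac{1}{2}l_c(M)$, handling them via Dibble's and Klingenberg's theorems respectively.

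For $r(M) = \frac{1}{2}inj(M)$, Dibble's formula asserts that on a complete Riemannian manifold $r(M) = \min\{\frac{1}{2}inj(M),\, r_f(M)\}$, so it suffices to verify $r_f(M) \geq \frac{1}{2}inj(M)$ in each of the three geometries. When $M$ is Euclidean or hyperbolic the sectional curvature is non-positive, so for any non-trivial Jacobi field $J$ with $J(0)=0$ the norm $||J||$ is convex and $||J||'(0^+) = ||J'(0)|| > 0$; hence $||J||'$ is non-decreasing and strictly positive for $t>0$, so no focal/conjugate time occurs and $r_f(M) = \infty$. When $M$ is spherical of curvature $1$, the explicit Jacobi fields $J(t) = \sin(t) E(t)$ on $\S^n$ give $r_f(M) = \pi/2$, while projecting any great circle through a lift of a point $p \in M$ yields a closed geodesic of length at most $2\pi$ through $p$, forcing $inj(M) \leq \pi$ and therefore $\frac{1}{2}inj(M) \leq \pi/2 = r_f(M)$.

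For $inj(M) = \frac{1}{2}l_c(M)$, the upper bound is routine: taking $p$ to be the midpoint of a shortest closed geodesic $\gamma$, the two halves of $\gamma$ give distinct geodesic segments of length $\frac{1}{2}l_c(M)$ from $p$ back to the opposite point, so $\exp_p$ is not injective on any open ball of larger radius. The lower bound invokes Klingenberg's theorem, which states $inj(M) \geq \min\{\pi/\sqrt{K_{\max}},\, \frac{1}{2}l_c(M)\}$ whenever $M$ is closed with sectional curvature bounded above by $K_{\max}$. For $M$ Euclidean or hyperbolic, $K_{\max}\leq 0$ so the first term is infinite and the inequality reduces to $inj(M) \geq \frac{1}{2}l_c(M)$ directly. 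For $M$ spherical, the first term equals $\pi$, but the great-circle argument of the previous paragraph also gives $l_c(M) \leq 2\pi$, so $\frac{1}{2}l_c(M) \leq \pi$ and the minimum is again $\frac{1}{2}l_c(M)$.

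The main obstacle is the spherical case, where one has to rule out both the focal-radius term in Dibble's formula and the positive-curvature term $\pi/\sqrt{K_{\max}}$ in Klingenberg's theorem from constraining the estimates. Both reductions hinge on the same elementary observation: every point of a closed spherical $n$-manifold lies on a closed geodesic of length at most $2\pi$, obtained by projecting a great circle through one of its lifts in $\S^n$. Once this is in hand, the two cited theorems combine to give the stated chain of equalities.
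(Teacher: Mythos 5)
Your overall strategy is the same as the paper's: feed the constant-curvature computations into Dibble's characterization of the convexity radius and Klingenberg's theorem on the injectivity radius. Your second half is fine: the upper bound $inj(M)\le \tfrac12 l_c(M)$ via the two halves of a shortest closed geodesic, Klingenberg's estimate $inj(M)\ge\min\{\pi/\sqrt{K_{\max}},\tfrac12 l_c(M)\}$, and the observation that projecting a great circle gives $l_c(M)\le 2\pi$ in the spherical case together give $inj(M)=\tfrac12 l_c(M)$, which matches the paper's use of Klingenberg's equality $inj(M)=\min\{r_c(M),\tfrac12 l_c(M)\}$.

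The gap is in the first half: the formula you attribute to Dibble, $r(M)=\min\{\tfrac12 inj(M),\,r_f(M)\}$, is not Dibble's theorem. Theorem 2.6 of Dibble (as used in the paper) states $r(M)=\min\{r_f(M),\tfrac14 l_c(M)\}$ for compact $M$. Your version does not follow from this together with Klingenberg unless one also knows $\min\{r_f(M),\tfrac14 l_c(M)\}\le\tfrac12 r_c(M)$ on every compact manifold, and there is no such general principle: along a geodesic whose curvature is flat for a while and then sharply positive, the focal time and the conjugate time are nearly equal, so the focal radius need not be at most half the conjugate radius. So as written, the equality $r(M)=\tfrac12 inj(M)$ rests on an unjustified (and in general dubious) citation. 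The repair is immediate from ingredients you already have: with the correct statement, $r_f(M)=\infty$ in the Euclidean and hyperbolic cases, while in the spherical case $r_f(M)=\pi/2$ and your bound $l_c(M)\le 2\pi$ gives $\tfrac14 l_c(M)\le \pi/2$; hence $r(M)=\tfrac14 l_c(M)$ in all three cases, and combining with your $inj(M)=\tfrac12 l_c(M)$ yields the lemma -- which is exactly the paper's argument (the paper bounds $l_c(M)\le 2\,\diam(M)\le 2\pi$ instead of using the great-circle projection, an inessential difference).
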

\begin{proof}
Theorem 2.6 of \cite{Dib} shows that when $M$ is compact, the convexity radius $r(M)$ equals $\min\{r_f(M), \frac{1}{4} l_c(M)\}$. When $M$ is hyperbolic or Euclidean, $r_f(M) = \infty$. When $M$ is spherical $r_f(M)=\pi/2$ and  $l_c(M)/4 \leq 2 diam(M)/4 \leq \pi/2$. So in either case, $r(M) = \frac{1}{4} l_c(M)$. Klingenberg\cite{Kli} has shown that $inj(M)=\min\{r_c(M), \frac{1}{2} l_c(M)\}$. For hyperbolic and Euclidean manifolds $r_c(M) = \infty$ and for spherical manifolds $r_c(M)=\pi$ and $\frac{1}{2} l_c(M) \leq \pi$, so in either case $inj(M)= \frac{1}{2} l_c(M)$.
\end{proof}

Cheeger's inequality roughly says that when we have an upper diameter bound, lower sectional curvature bound and lower volume bound we get a lower injectivity radius bound. The following is a sharper bound by  Heintze and Karcher (Corollary 2.3.2 of \cite{HeiKar}) which we state here only for constant curvature manifolds:

\begin{theorem}\cite{HeiKar}\label{HK}
Let $M$ be a complete spherical, Euclidean or hyperbolic $n$-manifold and let $\gamma$ be a closed geodesic in $M$. Then $l(\gamma) \geq 2\pi vol(M)/(\delta vol(\S^n))$ where $\S^n$ is the round $n$-sphere and
$$
\delta=\left\{
\begin{array}{cl}
diam(M)			&	\mbox{for $M$ Euclidean}\\
\sin^{n-1}(diam(M))	&	\mbox{for $M$ spherical}\\
\sinh^{n-1}(diam(M))	&	\mbox{for $M$ hyperbolic} \\
\end{array}\right.
$$
\end{theorem}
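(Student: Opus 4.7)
The plan is to apply a Heintze-Karcher-style tube comparison to the tubular neighbourhood $T_r(\gamma)$ of the closed geodesic, with $r = \diam(M)$. Since every point of $M$ lies within distance $\diam(M)$ of any fixed point of $\gamma$, this tube surjects onto $M$, so $vol(M) \leq vol(T_{\diam(M)}(\gamma))$. The rest of the proof amounts to bounding the right-hand side by an explicit computation in the constant-curvature model.

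I would use Fermi coordinates $(s, t, \omega)$ along $\gamma$, with $s \in [0, l(\gamma)]$ arclength along $\gamma$, $t \in [0, r]$ distance from $\gamma$, and $\omega \in \S^{n-2}$ the unit normal direction in which one leaves $\gamma$. In constant sectional curvature $K \in \{-1, 0, 1\}$, the Jacobi fields along each such normal geodesic split into $n-2$ ``angular'' directions with $|J(t)| = sn_K(t)$ and one ``tangential'' direction (along $\gamma'$, parallel-transported) with $|J(t)| = cs_K(t)$, where
\begin{equation*}
sn_K(t) = \begin{cases} t & K=0 \\ \sin t & K=1 \\ \sinh t & K=-1 \end{cases}, \qquad cs_K(t) = \begin{cases} 1 & K=0 \\ \cos t & K=1 \\ \cosh t & K=-1 \end{cases}.
\end{equation*}
Since Fermi coordinates are orthogonal, the volume form is the product of these scale factors, $dV = cs_K(t)\, sn_K^{n-2}(t)\, ds\, dt\, d\omega$. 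Past the cut locus the normal exponential map fails to be injective, so integrating this form over the full coordinate box only overcounts $vol(M)$, which preserves the direction of the inequality $vol(M) \leq \int dV$.

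Integrating over $[0, l(\gamma)] \times [0, \diam(M)] \times \S^{n-2}$ and using the antiderivative $(sn_K^{n-1})' = (n-1)\, cs_K\, sn_K^{n-2}$ gives
\begin{equation*}
vol(M) \;\leq\; l(\gamma) \cdot vol(\S^{n-2}) \cdot \frac{sn_K^{n-1}(\diam(M))}{n-1}.
\end{equation*}
The classical identity $vol(\S^n)/(2\pi) = vol(\S^{n-2})/(n-1)$ — an immediate consequence of $\Gamma((n+1)/2) = \tfrac{n-1}{2}\Gamma((n-1)/2)$ — converts this into $vol(M) \leq l(\gamma)\, \delta\, vol(\S^n)/(2\pi)$ with $\delta = sn_K^{n-1}(\diam(M))$, and rearranging yields the claimed bound. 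The main technical obstacle is rigorously justifying the overcounting step past the cut locus; in full generality this is exactly what the Rauch-comparison half of Heintze-Karcher's original argument supplies, and in the spherical case one must be additionally careful when $\diam(M) > \pi/2$, where $\cos(t)$ changes sign inside the integral, so that the bare Jacobi calculation above must be replaced by a direct appeal to Corollary 2.3.2 of \cite{HeiKar}.
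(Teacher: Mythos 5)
Your write-up is essentially the Heintze--Karcher tube computation specialized to constant curvature, and in the Euclidean and hyperbolic cases it is complete and correct: in constant curvature the Fermi-coordinate Jacobian $cs_K(t)\,sn_K^{n-2}(t)$ is exact rather than a comparison bound, it is positive for all $t>0$ when $K\le 0$, the normal exponential map on $[0,l(\gamma)]\times[0,\diam(M)]\times\S^{n-2}$ is onto $M$ (a minimizing segment from a point to $\gamma$ meets $\gamma$ orthogonally and has length at most $\diam(M)$), so the area formula gives $vol(M)\le\int|Jac|$, and your identity $vol(\S^{n-2})/(n-1)=vol(\S^n)/(2\pi)$ converts the integral into the stated bound. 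Be aware that the paper offers no proof to compare against --- Theorem \ref{HK} is quoted verbatim from Corollary 2.3.2 of \cite{HeiKar} --- so the only question is whether your reconstruction stands on its own.

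The one genuine soft spot is exactly the place you flagged, and your proposed repair is circular: for the spherical case with $\diam(M)>\pi/2$ you ``appeal directly to Corollary 2.3.2,'' but that corollary is the statement being proved. The gap can be closed without the citation. First, in curvature $+1$ the tangential Jacobi field along a geodesic normal to $\gamma$ is $\cos(t)$ times a parallel field, so every such geodesic has a focal point at $t=\pi/2$ and stops minimizing distance to $\gamma$ there; hence the tube of radius $\min(\diam(M),\pi/2)$ already covers $M$, and on that range $\cos t\ge 0$, so your computation goes through and yields the bound with $\delta=\sin^{n-1}\bigl(\min(\diam(M),\pi/2)\bigr)$. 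Second, this agrees with the claimed $\delta=\sin^{n-1}(\diam(M))$ for every spherical space form except $\S^n$ itself: if $\Gamma\ne\{1\}$ acts freely and orthogonally then $\sum_{g\in\Gamma}gy=0$ (a nonzero fixed vector would give a fixed point), so for any $x,y$ some $g$ satisfies $\langle x,gy\rangle\ge 0$, i.e. $\diam(\S^n/\Gamma)\le\pi/2$. For $M=\S^n$ the statement as quoted in the paper degenerates anyway ($\delta=\sin^{n-1}(\pi)=0$ makes the right-hand side infinite while $l(\gamma)=2\pi$), so the $\min(\cdot,\pi/2)$ form is the one that can actually be proved; that is an imprecision in the quotation, not in your tube argument. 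With this substitution for the final paragraph, your proof is a correct and self-contained derivation of the result the paper imports.
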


We use Lemma \ref{inj} and Theorem \ref{HK} to get a lower injectivity radius bound which is used to derive Corollary \ref{maincor} from Theorem \ref{mainthm}. In order to prove Theorem \ref{mainthm}, we first subdivide the given geometric triangulations sufficiently many times so that each simplex lies in a strongly convex ball. To bound the rate at which barycentric subdivisions scale the diameter of the simplex, we need the following theorem which we prove in Section \ref{scalingsec}.

\begin{theorem}\label{division}
Let $\beta^m \Delta$ be the $m$-th geometric barycentric subdivision of an $n$ simplex $\Delta$ with new vertices added at the centroid of simplexes. Let $\Lambda$ be an upper bound on the length of edges of $\Delta$. Then the diameter of simplexes of $\beta^m \Delta$ is at most $\kappa^m \Lambda$ where 
$$
\kappa=\left\{
\begin{array}{cl}
\frac{n}{n+1}			&	\mbox{for $M$ Euclidean}\\
\frac{2n}{2n+1}			&	\mbox{for $M$ spherical}\\
\frac{n\cosh^{n-1}(\Lambda)}{n\cosh^{n-1}(\Lambda) + 1}	&	\mbox{for $M$ hyperbolic} \\
\end{array}\right.
$$
\end{theorem}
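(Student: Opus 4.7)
The plan is to prove the bound inductively, reducing to the statement that a single barycentric subdivision scales edge lengths by a factor at most $\kappa$. Combined with the separate fact (Lemma \ref{diam} alluded to in Remark \ref{volrem}) that in any of the three constant curvature geometries the diameter of a geodesic simplex equals the length of its longest edge --- which follows because a convex function like $d(p,\cdot)$ attains its maximum on a convex hull at an extreme point --- the theorem reduces to checking that every edge of $\beta \Delta$ has length at most $\kappa\Lambda$. Every new edge connects the centroid $c_F$ of some face $F$ to the centroid $c_G$ of a face $G$ with $F \subsetneq G$, so I just need to bound $d(c_F, c_G)$ in each geometry.

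For the Euclidean case, write the vertex set of $G$ as $F \sqcup H$ and set $k = \dim G$, $j = \dim F$. A direct computation with the barycenter formula gives $c_G - c_F = \tfrac{k-j}{k+1}(c_H - c_F)$, and both $c_F$ and $c_H$ lie in $G$, so $|c_G - c_F| \leq \tfrac{k-j}{k+1}\diam(G) \leq \tfrac{n}{n+1}\Lambda$, which is exactly the Euclidean $\kappa$.

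For the spherical and hyperbolic cases I would work in the ambient embedding: $\S^n \subset \R^{n+1}$ with the standard inner product, and $\H^n$ as one sheet of the hyperboloid in $\R^{n,1}$ with the Lorentzian form. In each case the geodesic distance is expressed via $\arccos$ or $\mathrm{arccosh}$ of the ambient inner product, and the geometric centroid of vertices $v_0, \ldots, v_k$ is the ambient average $\tfrac{1}{k+1}\sum v_i$ normalised to have unit (respectively, unit Lorentzian) length. The computation from the Euclidean case carries over algebraically to the unnormalised ambient vectors, so $c_G$ lies on the ambient segment between $c_F$ and $c_H$; the extra work is to translate this ambient statement back into a bound on the intrinsic distance using the law of cosines. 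For $\S^n$, since $\Lambda \leq \pi/2$ forces $\langle v_i, v_j \rangle \geq 0$, a worst-case analysis (taking $\Lambda$ close to $\pi/2$ and the $v_i$ close to orthonormal) yields the constant $\tfrac{2n}{2n+1}$.

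The main obstacle is the hyperbolic bound, because the factor $\cosh^{n-1}(\Lambda)$ has to be produced in a principled way. My plan is an induction on the dimension $k$ of $G$: at each step, peel off one vertex $v$ from $G$ to obtain a $(k-1)$-face, and use the hyperbolic law of cosines to relate the Lorentzian inner product $-\langle c_G, c_F\rangle_{n,1}$ to the inner product $-\langle c_{G \setminus v}, c_F\rangle_{n,1}$ together with cross terms $-\langle v, \cdot\rangle_{n,1} = \cosh d(v, \cdot)$, each bounded by $\cosh(\Lambda)$. Summing the contributions of the $k$ peeled-off vertices and normalising produces an effective "$k$-simplex weight" of $k\cosh^{k-1}(\Lambda)$ in place of the Euclidean $k$, yielding the ratio $k\cosh^{k-1}(\Lambda)/(k\cosh^{k-1}(\Lambda)+1) \leq n\cosh^{n-1}(\Lambda)/(n\cosh^{n-1}(\Lambda)+1)$. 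Keeping the $\cosh$ factors from blowing up across all nested faces, and not just within one face, is the delicate point that requires care.
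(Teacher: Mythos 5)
Your overall reduction is the same as the paper's: show that every edge of $\beta\Delta$, being a medial segment $[c(F),c(G)]$ for faces $F\subsetneq G$, has length at most $\kappa\Lambda$, then iterate and invoke Lemma \ref{diam}. Your Euclidean computation $c_G-c_F=\tfrac{k-j}{k+1}(c_H-c_F)$ is correct and complete, and your identification of the spherical and hyperbolic centroids as normalised ambient averages is also right (it is exactly what the paper establishes via the radial projection taking midpoints to midpoints).

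The gap is in the curved cases, which are the entire content of the theorem. For the sphere you assert that "a worst-case analysis" yields $\tfrac{2n}{2n+1}$, and for hyperbolic space you describe a peeling induction that you yourself flag as unresolved ("keeping the $\cosh$ factors from blowing up \dots is the delicate point"). Neither is a proof: the obstruction is precisely that the ambient affine ratio $\tfrac{k-j}{k+1}$ is distorted by the normalisation, and you need a quantitative identity to control that distortion. The paper supplies one: a hyperbolic/spherical van Obel (cevian-ratio) theorem for the ratio $h(a,x,b)=\sinh d(a,x)/\sinh d(x,b)$, namely $h(a,o,z')=\cosh(d(a',z'))\,h(a,x,a')+\cosh(d(z',y))\,h(a,z,y)$, which feeds an induction giving $h\leq n\cosh^{n-1}(\Lambda)$ (resp.\ $s\leq n$ with $\sin$ in place of $\sinh$); a separate monotonicity argument for $\sinh(x)/x$ (resp.\ an estimate costing the factor $2$, which is where $2n$ actually comes from --- not from near-orthonormal vertices) then converts these into genuine distance ratios. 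You would need to state and prove such an identity to make your sketch work. A second structural point worth adopting: the paper never controls a general medial segment $[c(F),c(G)]$ by a ratio argument. It proves the ratio bound only for the vertex-to-opposite-facet median, and then handles arbitrary faces by the elementary Lemma \ref{adjedges} ($d(A,D)\leq\max(d(A,B),d(A,C))$ for $D\in[B,C]$), inducting $d(c(A),c(\Delta))\leq\max(d(c(\Delta),a),d(c(\Delta),c(A')))$ along $A=a\star A'$. This sidesteps exactly the "delicate point" your peeling scheme runs into.
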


We finally prove the main Theorem of this paper below.

\begin{proof}[Proof of Theorem \ref{mainthm}]
First assume that $K_1$ and $K_2$ are strongly convex geometric triangulations where links of all simplexes are shellable. By Lemma \ref{commonsub}, there exists a common geometric subdivision $K'$ of $K_1$ and $K_2$ with $s_i$ many $i$-simplexes in the $i$-skeleton of $K_1$. Using Lemma \ref{mainlemma} next we get a bound on the number of Pachner moves relating $\beta K_i$ and $\beta^2 K'$. Vertices that are common to both $K_1$ and $K_2$ are not removed by these Pachner moves.

Plugging in the bounds for $s_i$ from Lemma \ref{commonsub} in the formula obtained in Lemma \ref{mainlemma}, and then bounding $p_i$ by $\binom{n+1}{i+1} p_n$, we get the following bound for the number of moves relating $\beta K_1$ and $\beta^2 K'$. 

$$
\begin{array}{ll}
 	& \sum_{i=1}^n (n-i)! (i+1)!^2 p_{n-i-1} s_i \\
 < 	& (2^{n}-1) (n+1)!^2 q_n \sum_{i=1}^n (n-i)! (i+1)!^2 p_{n-i-1}p_i \\
 <  	& (2^n -1)(n+1)!^2 q_n \sum_{i=1}^n (n-i)! (i+1)!^2 \binom{n+1}{n-i} \binom{n+1}{i+1} p_n^2 \\
 < 	& (2^n - 1) (n+1)!^4 p_n^2 q_n \sum_{i=1}^n \frac{1}{(n-i)!}\\
 < 	& e \cdot (2^n-1)(n+1)!^4 p_n^2 q_n
\end{array}
$$

Exchanging the roles of $p_i$ and $q_i$ we get a bound on the number of moves relating $\beta K_2$ and $\beta^2 K'$. Summing them up we get the total number of moves needed to go from $\beta K_1$ to $\beta K_2$ as $e \cdot (2^n -1)(n+1)!^4 p_nq_n(p_n+q_n)$. To simplify notation we henceforth denote $p_n$ by $p$ and $q_n$ by $q$.

Given geometric triangulations $K_1$ and $K_2$ which may not be strongly convex, we need an integer $m$ such that $\beta^m K_i$ is strongly convex. That is, we need $m$ such that each simplex in $\beta^m K_i$ lies in a strongly convex ball or by Theorem \ref{division}, $\kappa^{m} \Lambda < 2r(M)$ where $\Lambda$ is an upper bound on the length of edges of $K_1$ and $K_2$. So we take $m$ to be any integer greater than $(\ln(2r(M))-\ln(\Lambda))/\ln(\kappa)$, as $\ln(\kappa)<0$. For $a>0$, $\ln(a+1) - \ln(a) =\int_{a}^{a+1} (1/x)dx > 1/(a+1)$. So we get $-1/\ln(\kappa) \leq \mu$ and we can take $m$ to be an integer greater than $\mu (\ln(\Lambda)-\ln(2r(M)))$ or by Lemma \ref{inj} we can take $m$ to be an integer greater than $\mu \ln(\Lambda/inj(M))$. For such values of $m$, $\beta^m K_1$ and $\beta^m K_2$ are strongly convex geometric triangulations.

When $n \leq 4$, the links of positive dimensional simplexes are  spheres of dimension at most $2$ and are therefore shellable. In general, to ensure that links of simplexes are shellable, by Theorem \ref{shellinglink}, we assume that $m$ is also greater than $2^{n+1}$. 

By Lemma \ref{barycentric}, the complexes $\beta^m K_1$ and $\beta^m K_2$ (with shellable links) have $(n+1)!^m p$ and $(n+1)!^m q$ many $n$-simplexes which are all strongly convex. So to go between $\beta \beta^{m} K_i$ we need $e \cdot (2^n -1) (n+1)!^{4+3m} p q (p + q)$ Pachner moves. By Lemma \ref{barymoves}, $\beta^{2^{n+1}} K_1$ and $\beta^{m+1} K_1$ are related by $(n+1)!^{2(m+1-2^{n+1}+1)}p^2 < (n+1)!^{2(m+2)} p^2$ moves (and similarly for $K_2$). So by the above arguments $\beta^{2^{n+1}} K_i$ are related by less than $e \cdot 2^n (n+1)!^{4+3m}pq(p + q)$ moves. When $n\leq 4$, the complexes $K_i$ are locally shellable. So by Lemma \ref{barymoves}, $K_1$ and $\beta^{m+1} K_1$ are related by $(n+1)!^{2(m+2)} p^2$ moves (and similarly for $K_2$).  Therefore we get the same bound $e \cdot 2^n (n+1)!^{4+3m}pq(p + q)$ on number of Pachner moves needed to go from $K_1$ to $K_2$ (instead of $\beta^{2^{n+1}} K_1$ to $\beta^{2^{n+1}} K_2$) when $n\leq 4$.

\end{proof}

\begin{proof}[Proof of Corollary \ref{isomcor}]
If $F:M \to N$ is an isometry then $F^{-1}(K_N)$ is a geometric triangulation of $M$. So by Theorem \ref{mainthm}, $K_1 = K_M$ and $K_2 = F^{-1}(K_N)$ are related by the given bounded number of Pachner moves. As $F$ is a simplicial isomorphism  from $K_2$ to $K_N$, we get the required result.

When $M$ and $N$ are complete finite volume hyperbolic manifolds of dimension at least 3, then by Mostow-Prasad \cite{Mos}\cite{Pra} rigidity, every homeomorphism is isotopic to an isometry. So if $K_M$ and $K_N$ are related by Pachner moves and simplicial isomorphisms, then $M$ and $N$ are homeomorphic and hence isometric. 

For dimensions up to 6, the PL and DIFF categories are isomorphic and by a theorem of de Rham \cite{DeR} diffeomorphic spherical manifolds are isometric, so the converse also holds for spherical manifolds of dimension at most 6. 

The converse is not true in the Euclidean case in any dimension as there are simplicially isomorphic flat tori which are not isometric. 
\end{proof}

\section{Subdivisions in constant curvature geometries}\label{scalingsec}
The aim of this section is to prove Theorem \ref{division} which gives the scaling factor for diameter of simplexes in the model geometries upon taking barycentric subdivisions.

\begin{definition}
Let $\Delta = [v_0, ..., v_n]$ be a geometric $n$-simplex. We define \emph{medians} and \emph{centroids} of faces of $\Delta$ inductively. Each vertex $v_i$ is defined to be its own centroid. We define the centroid of an edge of $\Delta$ as the midpoint of the edge. Having defined centroids of $k$ dimensional faces of $\Delta$, we define the medians of a $k+1$ dimensional face $\sigma$ as the geodesics in $\sigma$ joining a vertex of $\sigma$ to the centroid of its opposing $k$ dimensional face in $\sigma$. We define the centroid $c(\sigma)$ of $\sigma$ as the common intersection of all medians of $\sigma$. We shall show that such a common intersection exists for hyperbolic, spherical and Euclidean tetrahedra. Given simplexes $A$ and $B$ such that $\sigma = A\star B$, we define the medial segment joining $A$ and $B$ as the geodesic in $\sigma$ that connects the centroids $c(A)$ of $A$ and $c(B)$ of $B$. When $A$ or $B$ is a vertex the medial segment is a median.
\end{definition}

\begin{figure}
\centering
\includegraphics[width=0.4\textwidth]{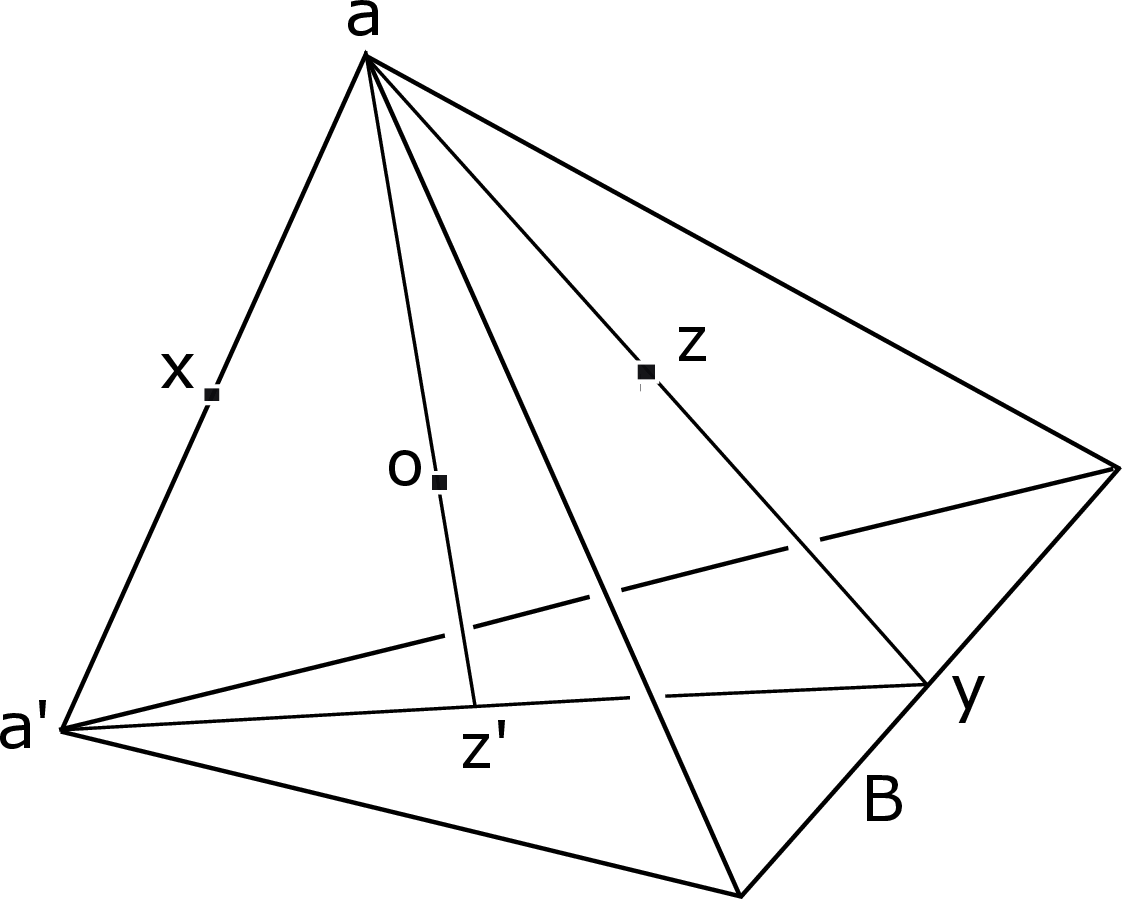}
\caption{An $n$-simplex $\Delta = a \star  a' \star  B$ with $x=c(a\star a')$, $y=c(B)$, $z=c(a\star B)$, $z'=c(a'\star B)$ and $o=c(\Delta)$, points on $\delta = [aa'y]$.}\label{ratiofig}
\end{figure}

\begin{lemma}\label{medianratio}
Let $\Delta$ be a Euclidean, hyperbolic or spherical $n$ dimensional simplex. All medial segments of $\Delta$ intersect at a common point $c(\Delta)$. Furthermore if $\Lambda$ is an upper bound for the length of the edges of $\Delta$ (with $\Lambda \leq \pi/2$ for $\Delta$ spherical) and $\Delta = a \star  B$ where $a$ is a vertex and $B$ is an $n-1$ dimensional face, then  $d(a, c(\Delta))/d(a, c(B)) \leq \kappa$ where $\kappa$ is as in Theorem \ref{division}.
\end{lemma}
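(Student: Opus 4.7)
My plan is to prove both claims by induction on $n$.

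For $n=1$, $\Delta=[a,a']$ is an edge, the centroid is its midpoint, $B=\{a'\}$, and the ratio equals $1/2$, which is $\leq \kappa_1$ in each of the three geometries (it equals $\kappa_1$ in the Euclidean and hyperbolic cases, and is strictly less than $\kappa_1=2/3$ in the spherical case).

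For the inductive step, fix a vertex $a$ of $\Delta$, choose an auxiliary vertex $a' \neq a$, and write $\Delta = a \star a' \star B$ where $B$ is the $(n-2)$-face opposite the edge $[a,a']$. By induction applied to the $(n-1)$-simplices $a\star B$ and $a'\star B$, the centroids $y := c(B)$, $z := c(a\star B)$, and $z' := c(a'\star B)$ exist, and the ratio part of the inductive hypothesis places $z \in [a,y]$ and $z' \in [a',y]$. Together with the midpoint $x := c(a\star a')$, all six points $a, a', x, y, z, z'$ therefore lie on the totally geodesic 2-disk $\delta := \mathrm{span}(a,a',y)$, which in a constant curvature manifold is isometric to a disk in $\mathbb{E}^2$, $\mathbb{S}^2$, or $\mathbb{H}^2$. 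Define $o$ as the transverse intersection of $[a,z']$ and $[a',z]$ in $\delta$; the content of the concurrency claim is then that every other medial segment of $\Delta$ also passes through $o$.

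Rather than verifying Ceva cevian-by-cevian, I establish concurrency uniformly by passing to the linear ambient model: lift the vertices to vectors $v_0,\ldots,v_n$ (in Euclidean $\mathbb{R}^n$, on the unit sphere in $\mathbb{R}^{n+1}$, or on the upper hyperboloid in Minkowski $\mathbb{R}^{n+1}$), and define the ambient centroid as the normalized sum $c(\Delta) := (\sum v_i)/\|\sum v_i\|$ (respectively the barycentric mean $\tfrac{1}{n+1}\sum v_i$ for Euclidean). For any decomposition $\Delta = A \star A^c$ we have $\sum v_i = \sum_{v\in A} v + \sum_{v\in A^c} v$, so $c(\Delta)$ lies in the 2-plane through $c(A)$ and $c(A^c)$; a short positivity argument (all coefficients positive because all the vertex vectors sit in the same open half-space/forward cone, with $\Lambda \leq \pi/2$ in the spherical case) puts $c(\Delta)$ on the geodesic segment $[c(A),c(A^c)]$, i.e.\ on every medial segment. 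Specialized to the two medians above, this identifies $o$ with $c(\Delta)$ and simultaneously establishes concurrency of all medial segments.

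For the ratio, I work in $\delta$ along the median $[a,z']$, writing $\lambda := d(a,z)/d(a,y)$ and $\lambda' := d(a',z')/d(a',y)$, both $\leq \kappa_{n-1}$ by induction. In the Euclidean case, a direct linear computation (equivalently classical Ceva) gives $d(a,o)/d(a,z') = 1/(2-\lambda) \leq 1/(2-\kappa_{n-1}) = n/(n+1) = \kappa_n$. In the spherical and hyperbolic cases I replace the linear formula with its $\sin$- or $\sinh$-analog, derived either from the ambient identity $\operatorname{sn}(d(a,c(\Delta))) = \alpha\operatorname{sn}(d(c(\Delta),c(B)))$ (where $\alpha = \|\sum_{v\in B} v\|$) or from the $\sin$/$\sinh$-Ceva theorem in $\delta$. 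Converting back from $\operatorname{sn}$-ratios to arclength ratios uses the elementary bounds $x \leq \sinh x \leq x\cosh(\Lambda)$ on $[0,\Lambda]$ (respectively $x \cdot \tfrac{\sin\Lambda}{\Lambda} \leq \sin x \leq x$ on $[0,\Lambda]$ with $\Lambda \leq \pi/2$). This yields, setting $r_n := \kappa_n/(1-\kappa_n)$, a one-step recursion $r_n \leq \tfrac{n}{n-1}\operatorname{cs}(\Lambda)\,r_{n-1}$ (with $\operatorname{cs}=\cosh$ in the hyperbolic case, $\operatorname{cs}\leq 1$ in the spherical case, and $\operatorname{cs}=1$ in the Euclidean case); iterating from $r_1=1$ gives $r_n\leq n\cosh^{n-1}(\Lambda)$, $r_n\leq 2n$, and $r_n=n$ respectively, equivalent to the three stated values of $\kappa_n$.

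The main obstacle is the hyperbolic case: the ratio genuinely depends on the edge lengths, and the $\cosh(\Lambda)$ factor must be accumulated exactly once per dimension in the recursion. Obtaining this precisely — rather than through a single non-recursive estimate, which would produce a higher power of $\cosh(\Lambda)$ — requires feeding the inductive $\sinh$-ratio identity for $(n-1)$-simplices into the ambient identity for $\Delta$ and discarding no information in the conversion between $\sinh$ and arclength. A secondary technical point is checking that $c(\Delta)$ really lies on the geodesic arc (not its antipodal continuation) in the spherical case, which is where the hypothesis $\Lambda \leq \pi/2$ enters.
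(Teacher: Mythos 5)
Your concurrency argument and the Euclidean ratio are fine, and they are essentially the paper's own route: the paper also passes to the ambient linear model (Euclidean span, hyperboloid in Minkowski space, round sphere), where the centroid is the normalized vertex sum and therefore lies on every medial segment; your direct positive-combination computation and the paper's ``radial projection preserves midpoints, hence medial segments and centroids'' are the same argument in different clothing. The reduction to the planar triangle $\delta=[a,a',c(B)]$ and the use of an exact trigonometric concurrency identity there is also exactly what the paper does (it invokes a hyperbolic/spherical van Aubel-type identity, as you do, likewise without proof), so none of that separates you from the paper.

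The gap is in the non-Euclidean ratio bound: the one-step recursion $r_n\le \frac{n}{n-1}\cosh(\Lambda)\,r_{n-1}$ on \emph{arclength} ratios is asserted, not derived, and it is precisely the step that fails under the bookkeeping you describe. If the inductive hypothesis is only the length-ratio bound $d(a,z)/d(z,y)\le r_{n-1}$, then converting to a $\sinh$-ratio costs a factor before you can use the identity in $\delta$: you get $\sinh d(a,z)/\sinh d(z,y)\le \cosh(\Lambda)\,r_{n-1}$, the identity then yields $d(a,o)/d(o,z')\le \cosh\Lambda+\cosh^2(\Lambda)\,r_{n-1}$, and iterating produces constants of size $n\cosh^{2(n-1)}(\Lambda)$, not $n\cosh^{n-1}(\Lambda)$; converting ``back and forth'' at every dimension necessarily compounds the conversion loss. (The spherical case shows the same inconsistency: with your factor $\le 1$ the recursion would give $r_n\le n$, whereas the correct $2n$ arises from a single conversion at the very end, since for $\sin$ the inequality between length ratios and $\sin$-ratios goes the wrong way.) You name this as the main obstacle but do not resolve it. The resolution, which is what the paper does, is to make the inductive quantity the $\sinh$- (resp.\ $\sin$-) ratio $h(a,c(\Delta),c(B))$ itself, prove $1\le h\le n\cosh^{n-1}(\Lambda)$ (resp.\ $s\le n$) by induction using the exact identity in $\delta$, and convert to an arclength ratio only once at the end, via monotonicity of $\sinh x/x$ (resp.\ via the separate lemma that $\sin p/\sin q\le n$ with $p,q\le\pi/2$ forces $p/q\le 2n$). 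Alternatively, your ambient identity $\sinh d(a,c(\Delta))=\alpha\,\sinh d(c(\Delta),c(B))$ is exact and needs no recursion at all, but then you must bound $\alpha$, which you never do; note $\alpha^2=n+2\sum_{i<j}\cosh d(v_i,v_j)\le n^2\cosh\Lambda$, so this route, carried out, would in fact give the even better constant $n\cosh^{1/2}(\Lambda)$ after a single end conversion --- but as written it neither produces your recursion nor completes the proof.
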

\begin{proof}
\emph{Case I: $\Delta$ is Euclidean.}
Realise $\Delta$ as a linear combination of basis vectors $(v_i)$ in $\R^{n+1}$. For each face $\sigma = [v_{i_0}, ..., v_{i_k}]$ of $\Delta$, let $c(\sigma)=(v_{i_0}+...+v_{i_k})/(k+1)$. By induction on the dimension of $\sigma$, we shall show that $c(\sigma)$ is the centroid of $\sigma$.

When $\sigma$ is a vertex or an edge, $c(\sigma)$ is by definition the centroid of $\sigma$. Assume that the centroid is well defined for all faces of $\Delta$ of dimension less than $k$. After relabeling the vertices, assume that $\sigma = [v_0, ..., v_k]$ and $\sigma = A\star B$ with $A=[v_0,..., v_{p}]$ and $B=[v_{p+1}, ..., v_k]$. The dimensions of $A$ and $B$ are $p$ and $q=k-(p+1)$. We can express $c(\sigma)$ as a convex linear combination of $c(A)$ and $c(B)$ as below:
$$\begin{array}{lll}
c(\sigma) 	& = & \frac{\sum_{i=0}^k v_i}{k+1}\\
		& = & \frac{p+1}{k+1} \frac{\sum_{i=0}^p v_i}{p+1} + \frac{(k+1)-(p+1)}{k+1}  \frac{\sum_{i=p+1}^k v_i}{k-p}\\
		& = & \frac{p+1}{k+1} c(A) + \frac{q+1}{k+1} c(B)
\end{array}
$$

The point $c(\sigma)$ therefore lies on the medial segment connecting the centroids of $A$ and $B$. Furthermore it divides the medial segment $[c(A), c(B)]$ in the ratio $(q+1)/(p+1)$. Taking $\sigma = \Delta$ and $A$ as a vertex $a$ we get $d(a, c(\Delta))/d(c(\Delta), c(B))= n$, so that taking reciprocals and adding one on both sides gives $d(a,c(\Delta))/d(a, c(B))= n/n+1$ as required.\\

\emph{Case II: $\Delta$ is hyperbolic.}
Let $E^{(n,1)}$ be the $(n,1)$ Minkowski space, i.e. $\R^{n+1}$ with the inner product $u.v = u_1 v_1 +... +u_n v_n - u_{n+1} v_{n+1}$. The $n$ dimensional hyperbolic space $\H^n$ has a natural embedding in $E^{(n,1)}$ as the component of the hyperboloid $||x||^2=-1$ which lies in the upper half space of $\R^{n+1}$. Let $T=\{v \in \E^{(n,1)} : ||v|| <0\}$ and let (Euclidean) line segments in $T$ with endpoints on $\H^n$ be called the chords of $\H^n$. Let $p: T \to \H^n$ be the radial projection $x \to  \frac{\sqrt{-1}}{||x||} x$. It is easy to see that $p$ takes chords to hyperbolic geodesic segments in $\H^n$. To see that $p$ takes midpoints of chords to midpoints of the corresponding geodesic segment take $x$ and $y$ in $\H^n$ and let $r \in O^{+}(n, 1)$ restrict to an isometry of $\H^n$ that exchanges $x$ and $y$. Let $m=(x+y)/2$ be the midpoint of the chord joining $x$ and $y$ and let $z=p(m)$ be its image on the geodesic segment $[x,y]$. As there is a unique geodesic segment between pairs of points in $\H^n$, the isometry $r$ reflects the geodesic segment $[x, y]$ fixing only the mid point of $[x,y]$. But as $r$ is linear in $\R^{n+1}$,  $r(z) =\frac{\sqrt{-1}}{||m||} r(m) = \frac{\sqrt{-1}}{||m||}m = z$, so $z$ is the midpoint of $[x,y]$.
 
Given a hyperbolic simplex $\Delta$ in $\H^n$ with vertices $v_i$, let $\Delta_0$ be the Euclidean convex linear combination of $v_i$ in $\R^{n+1}$. As the homeomorphism $p|_{\Delta_0}: \Delta_0 \to \Delta$, fixes the vertices and takes midpoints of edges to midpoints of edges, by induction, it takes medial segments to medials segments and hence takes centroids to centroids. In particular, all the medial segments of $\Delta$ intersect at the common point $c(\Delta)$ as in the Euclidean case.

For points $a$, $x$, $b$ in $\Delta$, define the ratio $h(a, x, b) = \sinh(d(a, x))/\sinh(d(x, b))$.  By induction on the dimension of $\Delta$ we shall prove that if $\Delta = a\star B$ with $a$ a vertex and $B$ an $n-1$ face, then $1 \leq h(a, c(\Delta), c(B)) \leq n \cosh^{n-1}(\Lambda)$. When $\Delta = a\star b$ is an edge, then $h(a, c(\Delta), b) = 1$. Let $\Delta = a \star  a' \star  B$ be an $n$ dimensional simplex. Let $\delta$ be the geodesic triangle $[a, a', c(B)]$ in $\Delta$. Let $x=c(a\star a')$, $y=c(B)$, $z=c(a\star B)$, $z'=c(a'\star B)$ and $o=c(\Delta)$ be points of $\delta$ as in Figure \ref{ratiofig}. As the medial segments of $\Delta$ all intersect at the centroid $o$, the segments $[a, z']$, $[a', z]$ and $[x, y]$ of $\delta$ have a common intersection  at $o$.  By the hyperbolic version of van Obel's Theorem, 
$$h(a, o, z')= \cosh(d(a', z')) h(a, x, a') + \cosh(d(z', y)) h(a, z, y)$$
As $x$ is the midpoint of $[a, a']$ so $h(a, x, a')=1$ and by induction applied to the $n-1$ simplex $a\star B$, $1 \leq h(a, z, y)=h(a, c(a\star B), c(B)) \leq (n-1) \cosh^{n-2} (\Lambda)$. As $1 \leq \cosh$,  $1 \leq h(a, o, z') \leq n \cosh^{n-1}(\Lambda)$ as required.

Define $f(x) = \sinh(x)/x$ for $x>0$ and $f(0)=1$. Then $f'(x) = (x \cosh(x) - \sinh(x))/x^2$ has positive numerator because it takes value $0$ at $0$ and it's derivative is positive. So $f$ is an increasing function. For $0<x \leq y$, $\sinh(x)/x \leq \sinh(y)/y$, i.e, $y/x \leq \sinh(y)/\sinh(x)$. As $h(a, o, z') \geq 1$, so $\sinh(d(a,o)) \geq \sinh(d(o,z'))$ and as $\sinh$ is a strictly increasing function so $d(a,o) \geq d(o,z')$. By above arguments then $d(a,o)/d(o,z') \leq h(a, o, z') \leq n \cosh^{n-1}(\Lambda)$. Taking reciprocals and adding one on both sides we get the required bound $\kappa$.\\

\emph{Case III: $\Delta$ is spherical.} Taking the standard embedding of $\S^n$ in $\R^{n+1}$ with $p: \R^{n+1} \setminus 0 \to \S^n$ as the radial projection $p(x) = \frac{x}{||x||}$ we can show that medial segments of a spherical simplex $\Delta$ have a common intersection at the centroid, as in the hyperbolic case.

Proceeding as in the hyperbolic case, using $s(a, x, b) = \sin(d(a, x))/\sin(d(x, b))$ instead of $h(a, x, b)$ and using the spherical van Obel theorem $$s(a, o, z')= \cos(d(a', z')) s(a, x, a') + \cos(d(z', y)) s(a, z, y)$$ we get the bound $s(a, o, z') \leq n$.

Suppose that for $0<p, q \leq \pi/2$, we are given $\sin(p)/\sin(q) \leq n$. Then we shall show that $p/q \leq 2n$. As $\sin(q) \leq q$ for $q>0$, so $\sin(p)/q \leq \sin(p)/\sin(q) \leq n$. Let $0<t_0< \pi/2$ be the point where $\sin(t_0) = \pi/4$. When $ t_0 \leq p\leq \pi/2$, $\sin(t_0) \leq \sin(p)$ so $\sin(t_0)/q \leq \sin(p)/q \leq n$ and we get $p/q \leq n \pi/(2\sin(t_0)) =2n$. When $0 < p \leq t_0$, $\cos(t_0) \leq \cos(p)$ and as $p \leq \tan(p)$ (see the power series expansion of tan for this relation) so $p \cos(p)/q \leq \sin(p)/q \leq n$. We therefore get $p/q \leq n/\cos(t_0) \leq 2n$ as $\cos(t_0) \geq 1/2$. Taken together we conclude that $p/q \leq 2n$ as required. As $s(a,o,z') \leq n$, $d(a,o)/d(o,z') \leq 2n$ and adding one and taking reciprocals gives the required bound $\kappa$ in the spherical case.


\end{proof}

\begin{lemma}\label{adjedges}
Let $ABC$ be a hyperbolic, Euclidean or spherical triangle. When $ABC$ is spherical we assume that the length of edges of $ABC$ is at most $\pi/2$. Then for any point $D$ on the segment $[B, C]$, $d(A, D) \leq \max(d(A, B), d(A, C))$.
\end{lemma}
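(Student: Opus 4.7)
The plan is to treat the three geometries in a uniform way via the law of cosines. Let $u = d(B,D)$ and $v = d(D,C)$, so $u+v = d(B,C)$, and let $\alpha = \angle ADB$ and $\beta = \angle ADC$. Since $D$ lies on the geodesic $[B,C]$, these two angles are supplementary, so $\cos\beta = -\cos\alpha$.

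For the spherical case, I would write the spherical law of cosines in each of the triangles $ABD$ and $ACD$, obtaining expressions for $\cos(AB)$ and $\cos(AC)$ whose $\cos\alpha$-terms carry opposite signs. Multiplying the first by $\sin(v)$ and the second by $\sin(u)$ and adding to kill $\cos\alpha$ yields
$$\cos(AD)\,\sin(u+v) \;=\; \sin(v)\cos(AB) + \sin(u)\cos(AC).$$
The hypothesis that the edges have length at most $\pi/2$ ensures $\cos(AB),\cos(AC)\geq 0$, so the right side is at least $\min(\cos(AB),\cos(AC))\cdot[\sin(u)+\sin(v)]$. Since $u+v\leq \pi/2$, the product-to-sum identity $\sin(u)+\sin(v)=2\sin((u+v)/2)\cos((u-v)/2)$ compared with $\sin(u+v)=2\sin((u+v)/2)\cos((u+v)/2)$ gives $\sin(u)+\sin(v)\geq \sin(u+v)$. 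Hence $\cos(AD)\geq \min(\cos(AB),\cos(AC))$, which is equivalent to $d(A,D)\leq \max(d(A,B),d(A,C))$ after inverting the decreasing function $\cos$ on $[0,\pi]$.

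The hyperbolic case is parallel: the same manipulation using the hyperbolic law of cosines produces
$$\cosh(AD)\,\sinh(u+v) \;=\; \sinh(v)\cosh(AB) + \sinh(u)\cosh(AC),$$
and the inequality $\sinh(u)+\sinh(v)\leq \sinh(u+v)$, a direct consequence of $\cosh\geq 1$ via $\sinh(u+v)=\sinh(u)\cosh(v)+\cosh(u)\sinh(v)$, yields $\cosh(AD)\leq \max(\cosh(AB),\cosh(AC))$, and hence the conclusion since $\cosh$ is monotone on $[0,\infty)$. The Euclidean case is immediate from convexity of the norm: writing $D=(1-t)B+tC$ with $t\in[0,1]$, one has $d(A,D)\leq (1-t)d(A,B)+t\,d(A,C)\leq \max(d(A,B),d(A,C))$.

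The only delicate point is the spherical estimate, and this is exactly where the hypothesis $\Lambda\leq \pi/2$ is essential: without the sign condition $\cos(AB),\cos(AC)\geq 0$ the weighted combination could reverse sign, and the concavity bound on $\sin$ also needs $u+v\leq \pi$. A cleaner conceptual alternative is to invoke convexity of metric balls (every ball is geodesically convex in $\E^n$ and $\H^n$, and the closed ball of radius at most $\pi/2$ is convex in $\S^n$): the ball of radius $\max(d(A,B),d(A,C))$ centered at $A$ then contains $B$ and $C$, hence contains the geodesic segment $[B,C]$, hence contains $D$. I would still prefer the law-of-cosines argument as the main proof since it makes the role of the $\pi/2$ hypothesis transparent.
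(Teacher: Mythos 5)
Your proof is correct, but it follows a genuinely different route from the paper. The paper handles the Euclidean and hyperbolic cases synthetically, by contradiction: if $d(A,D)$ exceeded both $d(A,B)$ and $d(A,C)$, then comparing sides and opposite angles in the triangles $ABD$ and $ACD$ would force $\angle B + \angle C > \pi$ (since $\angle ADB + \angle ADC = \pi$), contradicting the angle-sum bound in nonpositive curvature; the spherical case is then done by a separate coordinate argument, reducing to an isosceles triangle with $A$ at the north pole and comparing $z$-coordinates. You instead prove a Stewart-type identity from the law of cosines, $\cos(AD)\sin(u+v)=\sin(v)\cos(AB)+\sin(u)\cos(AC)$ (and its hyperbolic analogue), eliminate the supplementary angles at $D$, and conclude from $\sin(u)+\sin(v)\ge\sin(u+v)$ (resp.\ $\sinh(u)+\sinh(v)\le\sinh(u+v)$); the Euclidean case is convexity of the norm. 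This is uniform across the three geometries, is in the same spirit as the van Obel--type identities the paper itself uses in Lemma \ref{medianratio}, and has the virtue of making explicit exactly where the hypothesis $\Lambda\le\pi/2$ enters (nonnegativity of $\cos(AB),\cos(AC)$ and $u+v\le\pi$), which the paper only illustrates afterwards by the counterexample following Lemma \ref{diam}. Your alternative via geodesic convexity of balls of radius at most $\pi/2$ is also valid (for $B,C$ at distance less than $\pi$ the unique minimizing geodesic stays in the closed hemisphere) and is arguably the most conceptual; the paper's argument, by contrast, is shorter in the hyperbolic/Euclidean cases and needs no trigonometric identities. Minor remark: the lower bound $\min(\cos(AB),\cos(AC))\,[\sin(u)+\sin(v)]$ holds irrespective of sign; the sign hypothesis is what lets you then replace $\sin(u)+\sin(v)$ by $\sin(u+v)$ without reversing the inequality --- worth stating in that order.
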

\begin{proof}
Suppose that $ABC$ is a hyperbolic or Euclidean triangle for which the lemma is not true. Then the angle $ADB$ is less than angle $B$ and angle $ADC$ is less than angle $C$ which would imply that the sum of angles $B$ and $C$ is greater than $\pi$, a contradiction. 

Let $ABC$ be a spherical isosceles triangle in $S^2 \subset \R^3$ with $A$ at the north pole and with base $BC$ having $z$ coordinate $z_0 \geq 0$. The plane containing the origin, $B$ and $C$ intersects $S^2$ in the spherical geodesic segment $[B,C]$ which lies in the half space $z \geq z_0$. So for any point $D \in [B, C]$, $d(A, D) \leq d(A, B)$. When $ABC$ is an arbitrary spherical triangle with $A$ at the north pole, side $AB$ longer than side $AC$ and $z_0$ as the $z$-coordinate of $B$, we extend the side $AC$ to the point $C'$ which has $z$ coordinate $z_0$ so that $ABC'$ is an isosceles triangle. For any point $D \in [B, C]$, extend the segment $[A, D]$ to $D' \in [B C']$, then by the above argument $d(A, D) \leq d(A, D') \leq d(A, B)$.
\end{proof}

\begin{lemma}\label{diam}
Let $\Delta$ be a hyperbolic, spherical or Euclidean simplex. If $\Delta$ is spherical we assume the length of its edges is at most $\pi/2$. Then the diameter of $\Delta$ is the length of the longest edge of $\Delta$.
\end{lemma}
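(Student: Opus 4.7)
The plan is to prove a slightly stronger statement: for any $y \in \Delta$, the maximum of $d(y, \cdot)$ over $\Delta$ is attained at a vertex of $\Delta$. Granted this, the lemma follows by applying the stronger statement twice. First, fixing $y$, one finds a vertex $v^*$ with $d(y, z) \leq d(y, v^*)$ for every $z \in \Delta$. Second, applying the claim with $v^*$ in place of the fixed point gives a vertex $v^{**}$ with $d(v^*, y) \leq d(v^*, v^{**})$. Then $d(y,z) \leq d(v^*, v^{**})$, which is the length of an edge of $\Delta$, so it is at most the longest edge. Since the longest edge is itself realized as $d(y,z)$ for $y,z$ two vertices, the bound is sharp.

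I would prove the stronger statement by induction on $k$, the dimension of the smallest face of $\Delta$ containing the target point $z$ in its interior. The base case $k=0$, when $z$ is itself a vertex, is immediate. For $k \geq 1$, let $G$ be the $k$-face containing $z$ in its interior, pick any vertex $v$ of $G$, and extend the geodesic from $v$ through $z$ inside $G$ until it meets the opposite $(k{-}1)$-face of $G$ at a point $w$, with $z$ on the segment $[v,w]$. Existence and uniqueness of $w$ follow from convexity of $G$: by Lemma \ref{convex}, one can model $G$ as a linear Euclidean simplex in which the claim is obvious. Then Lemma \ref{adjedges} applied to the geodesic triangle $yvw$ gives $d(y, z) \leq \max(d(y, v), d(y, w))$. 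The vertex $v$ of $G$ is a vertex of $\Delta$, and $w$ lies in the $(k{-}1)$-skeleton of $\Delta$, so the induction hypothesis bounds $d(y, w)$ by $\max_{v' \text{ vertex}} d(y, v')$, completing the inductive step.

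The main obstacle is verifying the hypothesis of Lemma \ref{adjedges} in the spherical case, which requires the triangle $yvw$ to have all sides of length at most $\pi/2$. I would dispatch this with a preliminary observation: if $\Delta$ is a spherical simplex with all edge lengths at most $\pi/2$, then the vertices $v_0, \dots, v_n$, viewed as unit vectors in $\mathbb{R}^{n+1}$, have pairwise non-negative inner products (since $\cos$ is non-negative on $[0,\pi/2]$). Every point of $\Delta$ is the normalization of a non-negative linear combination of the $v_i$, so any two points of $\Delta$ also have non-negative inner product, and hence spherical distance at most $\pi/2$. This applies in particular to the sides $d(y,v)$, $d(y,w)$, $d(v,w)$ of the triangle in question, so Lemma \ref{adjedges} is available throughout the induction.
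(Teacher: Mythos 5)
Your proposal is correct and takes essentially the same route as the paper: both proofs extend a geodesic from a vertex of a face through the given point to the opposite face and then apply Lemma \ref{adjedges} together with an induction on dimension, your version merely repackaging this as ``the farthest point of $\Delta$ from any fixed point is a vertex,'' applied twice. Your explicit check that a spherical simplex with edge lengths at most $\pi/2$ has all pairwise distances at most $\pi/2$ (via pairwise non-negative inner products of the vertices) is a sound addition that the paper leaves implicit in its definition of a spherical simplex.
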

\begin{proof}
Let $[x,y]$ be a maximal segment in $\Delta$ and assume that it does not lie in any proper simplex of $\Delta$. Let $x \in A$, $y \in B$ for simplexes $A$ and $B$ in $\del \Delta$ then $\Delta = A\star B$. If both $x$ and $y$ are vertices then trivially, $d(x,y)=l([x,y])$ is at most length of longest edge of $\Delta$. If $x$ is not a vertex, then let $A=a\star A'$ with $a$ a vertex of $A$. Extend the segment $[a, x]$ to $x' \in A'$. Applying Lemma \ref{adjedges} to the triangle $[ax'y]$, $d(y, x) \leq \max(d(y, a), d(y, x'))$. As dimensions of $a\star B$ and $A'\star B$ are both less than dimension of $\Delta$, so by induction $d(y, x)$ is at most the length of the longest edge of $\Delta$.
\end{proof}

Note that Lemma \ref{diam} is not true for spherical triangles with edges longer than $\pi/2$ as can be seen by taking an isosceles triangle with base length less than $\pi/2$ and the equal length edges of length more than $\pi/2$. The diameter of such a triangle is the length of the altitude on the base, which is greater than the length of all the edges.

We are finally in a position to prove the main Theorem of this section:
\begin{proof}[Proof of Theorem \ref{division}]
We shall first show, by induction on the dimension of faces $A$ of $\Delta$, that $d(c(A), c(\Delta)) \leq \kappa \Lambda$. When $A$ is a vertex, by Lemma \ref{medianratio} and Lemma \ref{diam}, $d(a, c(\Delta)) \leq \kappa d(a, c(B)) \leq \kappa \diam(\Delta) \leq \kappa \Lambda$. For $A=a\star A'$, consider the triangle $T = [a, c(A'), c(\Delta)]$. As the medial segment $[a, c(A')]$ passes through $c(A)$, the segment $[c(\Delta), c(A)]$ lies in $T$ and by Lemma \ref{adjedges}, $d(c(\Delta), c(A))$ is at most $\max(d(c(\Delta), a), d(c(\Delta), c(A')))$ which is in turn bounded by $\kappa \Lambda$ by induction.

Each edge of $\beta \Delta$ is a medial segment in some simplex $\delta \in \Delta$, of the kind $[c(\delta), c(A)]$ for $A \in \delta$. By above arguments, length of such edges is bounded by $\kappa \Lambda$. Repeating the argument for $\beta \Delta$ in place of $\Delta$, taking $\kappa \Lambda$ as the upper bound for length of edges, we get the bound $\kappa^2 \Lambda$ for edges of $\beta^2 \Delta$. Repeating the argument $m$ times and applying Lemma \ref{diam}, we get the required upper bound for the diameter of simplexes of $\beta \Delta$.
\end{proof}

To see that the constant $\kappa$ in the hyperbolic case can not be made independent of the length of the edges, consider a hyperbolic isosceles triangle $\Delta = ABC$ with base $BC$. Let $a$ and $b$ be the length of the sides opposite to vertices $A$ and $B$, let $m$ be the length of the median from $A$ and let $x$ be the distance from $A$ to the centroid of $ABC$. Assume that $m=ya$ for some $y>0$. By the hyperbolic version of Pythagorean theorem, $\cosh(b)=\cosh(a/2) \cosh(m)$ which gives the following for all $a>0$:
$$ 1 \leq b/m  = 	 \frac{\cosh^{-1}(\cosh(a/2)\cosh(ya))}{ya}  \leq  \frac{\cosh^{-1}(\cosh(ya + a/2))}{ya} = 	 1+ \frac{1}{2y}$$

So for any fixed base length $a$ and isosceles triangle as above with $m=ya$, $lim_{y\to \infty} m/b \to 1$. Also, as $\sinh(x)/\sinh(m) = 2\cosh(a/2)/(2\cosh(a/2)+1) \to 1$ as $a\to \infty$. So for large enough $a$ and $y$, $x/b = (x/m) (m/b)$ is as close to $1$ as required. In other words, the diameter of simplexes in $\beta \Delta$ can be made arbitrarily close to the diameter of $\Delta$.

\begin{Data}
Data sharing not applicable to this article as no datasets were generated or analysed during the current study.
\end{Data}

\begin{acknowledgements}
The first author was supported by the MATRICS grant of Science and Engineering Research Board, GoI and the second author was supported by an award from the National Board of Higher Mathematics, GoI.
\end{acknowledgements}

\bibliographystyle{alpha}
\bibliography{Final}

\end{document}